\newtheorem{theo}{Theorem}[section]
\newtheorem{lemm}[theo]{Lemma}
\newtheorem{prop}[theo]{Proposition}
\newtheorem{coro}[theo]{Corollary}
\theoremstyle{definition}
\newtheorem{defi}[theo]{Definition}
\newtheorem{cons}[theo]{Construction}
\newtheorem{example}[theo]{Example}
\newtheorem{rem}[theo]{Remark}
\newtheorem*{theo*}{Theorem}
\numberwithin{equation}{section}
\newcommand{\op}{^{\mathrm{op}}}
\newcommand{\on}{\operatorname}
\newcommand{\id}{\mathrm{id}}
\newcommand{\Ho}{\mathrm{Ho}}
\newcommand{\Ker}{\mathrm{Ker}}
\newcommand{\Img}{\mathrm{Im}}
\newcommand{\oper}{\mathcal}
\newcommand{\cat}{\mathrm}
\newcommand{\icat}{\mathbf}
\newcommand{\inn}{\icat{N}} 
\newcommand{\ch}{\cat{Ch}_*}
\newcommand{\ich}{\icat{Ch}_*}
\newcommand{\ichp}{\icat{Ch}_{\geq 0}}
\newcommand{\pS}{\widehat{\cat{S}}}
\newcommand{\WMod}{\mathrm{WMod}}
\newcommand{\WComp}{\mathrm{WComp}}
\newcommand{\iWComp}{\mathbf{WComp}}
\newcommand{\TMod}{\mathrm{TMod}}
\newcommand{\TComp}{\mathrm{TComp}}
\newcommand{\iTComp}{\mathbf{TComp}}
\newcommand{\chpure}[1]{\cat{Ch}_*(gr\Aa)^{#1\text{-pure}}}
\newcommand{\apure}{{\alpha\text{-}pure}}
\newcommand{\dga}[2]{gr^{(#1)}\mathsf{DGA}_{#2}}
\newcommand{\cochp}{\cat{Ch}^{\geq 0}}
\newcommand{\chp}{\cat{Ch}_{\geq 0}}
\newcommand{\alg}{\cat{Alg}}
\newcommand{\ialg}{\icat{Alg}}
\newcommand{\Mod}{\mathrm{Mod}}
\newcommand{\ceil}[1]{\lceil#1\rceil}
\newcommand{\floor}[1]{\lfloor#1\rfloor}
\newcommand{\modh}{\,(\text{mod } h)}
\newcommand{\lra}{\longrightarrow}
\newcommand{\Aa}{\mathcal{A}}
\newcommand{\Cc}{\mathcal{C}}
\newcommand{\Dd}{\mathcal{D}}
\newcommand{\Hh}{\mathcal{H}}
\newcommand{\Mm}{\mathcal{M}}
\newcommand{\Qq}{\mathcal{Q}}
\newcommand{\C}{\mathbb{C}}
\newcommand{\Q}{\mathbb{Q}}
\newcommand{\Z}{\mathbb{Z}}
\newcommand{\R}{\mathbb{R}}
\newcommand{\F}{\mathbb{F}}
\renewcommand{\l}{\ell}
\newcommand{\kk}{\mathbf{k}}
\newcommand{\Joana}[1]{{\color{ForestGreen}{#1}}}
\title{Étale cohomology, purity and formality with torsion coefficients}
\author{Joana Cirici}
\author{Geoffroy Horel}
\address{Departament de Matemàtiques i Informàtica, Universitat de Barcelona\\
Gran Via 585\\
08007 Barcelona, Spain  / Centre de Recerca Matemàtica, Edifici C, Campus Bellaterra, 08193 Bellaterra, Spain}
\email{jcirici@ub.edu}
\address{Université Sorbonne Paris Nord, Laboratoire Analyse, Géométrie et Applications, CNRS (UMR 7539), 93430, Villetaneuse, France.}
\email{horel@math.univ-paris13.fr}
\thanks{
J. Cirici acknowledges the Serra H\'{u}nter Program and the AEI (CEX2020-001084-M and PID2020-117971GB-C22).
G. Horel acknowledges support from the project ANR-16-CE40-0003 ChroK. Both authors
thank the ANR-20-CE40-0016 HighAGT 
}
\begin{document}
\maketitle

\begin{abstract}
We use Galois group actions on étale cohomology to prove results of formality
for dg-operads and dg-algebras with torsion coefficients.
Our theory applies, among other related objects, to the 
dg-operad of singular chains of the
operad of little disks and to the dg-algebra of singular cochains of the configuration space of points in the complex space.
The formality that we obtain is only up to a certain degree, which depends on the cardinality of the field of coefficients.
\end{abstract}

\setcounter{tocdepth}{1}

\tableofcontents

\section*{Important note}

This version of the paper is different from the published one \cite{ciricietale}. In that version, Proposition \ref{Nformaldg-algebras} was incorrect as was explained in a Corrigendum (see \cite{ciricicorrigendum}). The current version of the paper is correct as far as the authors know. Here, we have included an appendix that shows that the simple connectivity assumption in Proposition \ref{Nformaldg-algebras} cannot be removed.

\section{Introduction}

The notion of formality was first introduced in the setting of rational homotopy, in which a topological space $X$ is said to be \textit{formal} if its Sullivan algebra of polynomial forms $\Aa_{pl}^*(X)$ is connected to its cohomology $H^*(X;\Q)$ by a string of quasi-isomorphisms of commutative dg-algebras over $\Q$. In this case, any invariant of the rational homotopy type of $X$ can be computed from the cohomology algebra of $X$. 
This notion may be extended to coefficients in an arbitrary commutative ring $R$, by asking that the complex of singular cochains $C^*(X,R)$ is quasi-isomorphic to its cohomology as dg-algebras. We point out here that in general the singular cochains do not have the structure of a commutative dg-algebra so the question of whether cochains and cohomology are quasi-isomorphic as commutative dg-algebras does not even make sense. Although not strictly commutative, the singular cochains have the structure of an $E_\infty$-algebras and this object is a very strong invariant of the homotopy type of the space by a result of Mandell \cite{mandellcochains}. Therefore, one could also ask whether $C^*(X,R)$ and $H^*(X,R)$ are quasi-isomorphic as $E_\infty$-algebras. However, this almost never happens if $R$ is not a $\Q$-algebra (for instance if $R=\F_p$, formality in the $E_\infty$-context implies that Steenrod operations are trivial). Even though it is not the case that the homotopy type of the dg-algebra $C^*(X,R)$ is a complete invariant of the $R$-homotopy type of $X$, several invariants of $X$ can be computed from it, such as the homology of $\Omega X$ with its Pontryagin products structure or the string topology of $X$ when it is an orientable manifold. Furthermore, when $R$ is a field of characteristic zero,  the above notions of formality are equivalent by a result of Saleh \cite{Saleh}. 

The idea that purity implies formality goes back to Deligne, Griffiths, Morgan and Sullivan, who used the Hodge decomposition to show that compact Kähler manifolds are formal over $\Q$ \cite{DGMS}. Since then, Hodge theory has been used successfully several times to prove formality results over $\Q$ in different settings (see for instance 
\cite{Mo}, \cite{santosmoduli}, \cite{Dupont}, \cite{petersenminimal}, \cite{CiHo}). Using the methods of étale cohomology, Deligne \cite{DeWeil} gave an alternative proof of formality for the $\Q_\l$-homotopy type of smooth and proper complex schemes. However,  
it seems that the full power of Galois actions has not been further exploited to address formality questions especially in the case of torsion coefficients. Note that while Hodge theory is confined to rational coefficients, étale cohomology with torsion coefficients is perfectly well-defined and possesses interesting Galois group actions, making it a very valuable tool to study formality in the torsion case.

The theory of weights on the cohomology of algebraic varieties rests on fundamental ideas of Grothendieck and Deligne, and is strongly influenced by Grothendieck's philosophy of mixed motives. Even though a category of motives satisfying all the desired properties is still out of reach, the theory of weights is well-understood on the rational cohomology of complex algebraic varieties and on the étale cohomology of varieties over finite fields. In the case of complex varieties, the cohomology groups support a mixed Hodge structure which is pure for smooth projective varieties. Likewise, the étale cohomology groups of a variety defined over a finite field are acted on by the absolute Galois group of the field. Deligne \cite{DeWeil} showed that the eigenvalues of the Frobenius are in general Weil numbers and that for smooth and projective varieties, the eigenvalues of the Frobenius action on the $n$-th cohomology groups are Weil numbers of pure weight $n$. Let us mention at this point that the Frobenius also acts on the étale cohomology groups with torsion coefficients which is key to the applications we have in mind. It is also important to note that the purity property holds more generally than just for smooth projective varieties: there are many interesting examples of singular and open varieties whose weights in cohomology turn out to be pure in a more flexible way which we call \textit{$\alpha$-purity}.

The notion of formality makes sense (and has proven to be very useful) in many other algebraic contexts outside dg-algebras, such as operads, operad algebras and symmetric monoidal functors. In this paper, we use the theory of weights in étale cohomology to prove partial results of the type ``purity implies formality'' when such algebraic structures arise from the category of algebraic varieties. Although the methods and conditions become quite technical, the theory has applications to very classical and well-known objects. In particular, we address questions of 
Salvatore and Beilinson on formality with torsion coefficients for configuration spaces and little disks operads respectively. The results that we obtain have potential applications in diverse situations, such as to the study of Massey products for complements of subspace arrangements, to embedding calculus, via the Vassiliev-Goodwillie-Weiss spectral sequence, or to deformation quantization \`{a} la Kontsevich with torsion coefficients.

Let us briefly explain how purity can be used to prove formality on a simple example. Consider the étale cohomology of $X=\mathbb{P}^n_{k}$ where $k=\F_{q}$ is a finite field. In that case, the vector space $H^i_{et}(X_{\overline{k}},\Q_\l)$ is of dimension $1$ if $i\leq 2n$ is even and of dimension zero otherwise. Moreover the Frobenius of $k$ acts by multiplication by $q^{i}$ on the $2i$-th cohomology group. Let us write $A=C^*_{et}(X,\Q_\l)$ the dg-algebra of étale cochains on $X$. The Frobenius also acts on $A$ and we can thus consider the subalgebra $B$ where we only keep the generalized eigenspaces of the Frobenius for the eigenvalues that are powers of $q$. This dg-algebra is quasi-isomorphic to $A$ as the other eigenspaces will not contribute to the cohomology. We can then split $B$ as $B=\oplus_{i \in \Z}B(i)$ where $B(i)$ is the generalized eigenspace for the eigenvalue $q^i$. Observe that the cohomology of $B(i)$ will be concentrated in degree $2i$. Moreover, this decomposition is compatible with the multiplication. We have thus produced a multiplicative splitting of the Postnikov filtration of $B$ which is another way to phrase formality. There is a technical difficulty with this sketch as in general, the dg-algebra $A$ will not be finite dimensional and it does not make sense to decompose it as a sum of generalized eigenspaces. However, this issue can be fixed as we will see.
If we wanted instead to prove formality of $A=C^*_{et}(X,\F_\l)$, the above sketch would also work modulo the fact that $q^i$ can be equal to $1$ in $\F_\l$, since $\F_\l^\times$ is a finite group. Therefore, we have a decomposition $B=\oplus_{i\in\Z/h}B(i)$ where $h$ is the order of $q$ in $\F_\l^\times$. This splitting is insufficient to prove formality in full generality but will imply formality when $h\geq n$.

\medskip 

We now spell out in more detail the results that we prove in this paper. Let $K$ be a $p$-adic field and $\overline{K}$ its algebraic closure. We assume that $\overline{K}$ is embedded in $\C$. 
We will denote by $\cat{Sch}_K$ the category of schemes over $K$ that are separated and of finite type.
For $X\in \cat{Sch}_K$, the Galois action on étale cohomology actually exists at the cochain level and there exists a functorial dg-algebra $C^*_{et}(X_{\overline{K}},\F_\l)$
endowed with an endomorphism $\varphi$ corresponding to the choice of a Frobenius lift. The dg-algebra of \textit{étale cochains} relates to singular cochains as follows. Denote by $X_{an}$ the complex analytic space underlying 
$X_\C=X\times_{K}\C$.
Then we have quasi-isomorphisms 
\[
C^*_{sing}(X_{an},\F_\l)\longleftarrow C_{et}^*(X_{\C},\F_\l)\lra C_{et}^*(X_{\overline{K}},\F_\l),
\]
giving symmetric monoidal natural transformations of functors.

Let $\F_q$ be the residue field of $K$ and denote by $h$ the order of $q$
in $\F_\l^\times$. Let $\alpha$ be a positive rational number, with $\alpha<h$.
We say that $H^n_{et}(X_{\overline{K}},\F_\l)$ is a \textit{pure Tate module of weight $\alpha n$}
if the only eigenvalue of the Frobenius is $q^{\alpha n}$, with $\alpha n\in\mathbb{N}$.
If $\alpha n\notin \mathbb{N}$ 
we impose that $H^n_{et}(X_{\overline{K}},\F_\l)=0$.

To study the homologically graded case, we dualize the functor $C^*_{et}(-,\F_\l)$.
As a result, we obtain a lax symmetric monoidal $\infty$-functor
$C_*^{et}(-,\F_\l)$ of \textit{étale chains} from $\inn(\cat{Sch}_K)$ to the $\infty$-category of chain complexes equipped with an automorphism. This allows us to prove the following result:

\theoremstyle{theorem}
\newtheorem*{I2}{\normalfont\bfseries Theorem \textbf{\ref{maintorsion}}}
\begin{I2}
Let $P$ be an operad in sets and let $X$ be a  $P$-algebra in $\cat{Sch}_K$.  Let $N=\floor{(h-1)/\alpha}$. Assume that for each color $c$ of $P$, the cohomology $H^n_{et}(X(c)_{\overline{K}},\F_\l)$ is a pure Tate module of weight $\alpha n$, for all $n$. Then:
\begin{enumerate}
\item There is an $N$-equivalence $C_*(X_{an},\F_\l)\simeq H_*(X_{an},\F_\l)$ in the $\infty$-category of $P$-algebras in $\ich(\F_\l)$.
\item If $P$ is admissible and $\Sigma$-cofibrant, then $C_*(X_{an},\F_\l)$ is $N$-formal as a dg-$P$-algebra.
\end{enumerate}
\end{I2}

By definition, an \textit{$N$-equivalence} is a map of algebras, which induce isomorphisms in homology only up to degree $N$. 
The condition that the operad $P$ is $\Sigma$-cofibrant and admissible
is a technical condition that is only needed in order to transfer $N$-formality from the $\infty$-category of $P$-algebras 
to the category of $P$-algebras.

We mention a few examples where this theorem applies.
Denote by $\overline{\Mm}_{0,n}$ the moduli space of stable
algebraic curves of genus $0$ with $n$ marked points.
The operations that relate the different moduli spaces
$\overline{\Mm}_{0,n}$ identifying marked points form a cyclic operad $\overline{\Mm}_{0,\bullet}$ in the category of smooth proper schemes over $\Z$, whose cohomology is pure. We deduce that 
the cyclic dg-operad $C_*((\overline{\mathcal{M}}_{0,\bullet})_{an},\F_\l)$ is $2(\l-2)$-formal
(Theorem \ref{dmformal}). This extends the formality of $\overline{\Mm}_{0,\bullet}$ over $\Q$ proved in \cite{santosmoduli} to formality with torsion coefficients.

Using the action of the Grothendieck-Teichmüller group, we can also apply this machinery to the little disks operad $\oper{D}$ even though it is not an operad in the category of schemes. We prove $(\l-2)$-formality for the dg-operad $C_*(\oper{D},\F_\l)$.
A similar result applies to the framed little disks (Theorems \ref{littledisksformal} and \ref{framedlittledisksformal}).
A direct consequence is the formality of the $(\l-1)$-truncated operad $C_*(\oper{D}_{\leq (\l-1)},\F_\l)$,
where  $\oper{D}_{\leq n}$ denotes the truncation of $\Dd$ in arity less than or equal to $n$.
This is optimal, since $C_*(\oper{D}_{\leq \l},\F_\l)$ is not formal (see Remark \ref{remarkoptimal}).
This result potentially paves the way for a version of a Kontsevich Formality Theorem over a field of positive characteristic. Also, the 
above method is applied to higher dimensional little disks operads in the work \cite{BH} of the second author with Pedro Boavida de Brito, leading to applications to embedding calculus.
The only extra ingredient needed is the construction of a Frobenius automorphism on the chains over those operads. We point out that Beilinson, in a letter to Kontsevich was conjecturing that the weight filtration in the triangulated category of motives could be used in order to prove formality of the little disks operad over the integers. Remark \ref{remarkoptimal} shows that this was overly optimistic and indicates that our result might be the closest one can get to answering Beilinson's question.

Theorem \ref{maintorsion} is restricted to non-negatively graded homological algebras.  For cohomological dg-algebras we prove the following:

\theoremstyle{theorem}
\newtheorem*{I3}{\normalfont\bfseries Theorem \textbf{\ref{maindg-algebras}}}
\begin{I3}
Let $X\in \cat{Sch}_K$ be a scheme over $K$.
Assume that for all $n$, $H^n_{et}(X_{\overline{K}},\F_\l)$ is a pure Tate module of weight $\alpha n$.
Then the following is satisfied:
\begin{enumerate}
 \item [(i)] If $\alpha(k-2)/h\notin\mathbb{Z}$ then all $k$-tuple Massey products are trivial in $H^*(X_{an},\F_\l)$.
 \item [(ii)]If $H^i(X_{an},\F_\l)=0$ for all $0<i\leq r$ then $H^n(X_{an},\F_\l)$ contains no non-trivial Massey products
 for all $n\leq \ceil{\frac{hr}{\alpha}+2r+1}$.
  \item [(iii)] If it is simply connected, then the dg-algebra
  $C^*_{sing}(X_{an},\F_\l)$ is $N$-formal, with $N=\floor{(h-1)/\alpha}$.
\end{enumerate}
\end{I3}

The above result applies, for instance, to codimension $c$ subspace arrangements defined over $K$ (see Theorem \ref{formal_subspaces}).
In particular, it applies to configuration spaces of points in $\mathbb{A}^n$. In Theorem \ref{formal_config} we
show that the dg-algebra 
$C_{sing}^*(F_m(\mathbb{C}^d),\F_\l)$ is formal up to a degree that depends on $\l$ and $d$.
This in particular gives full formality of $C_{sing}^*(F_m(\mathbb{C}^d),\F_\l)$ whenever $\l\geq (m-1)d+2$.
These results partially answer a question raised at the end of \cite{Salvatore},
about the degree of obstructions to formality over $\F_\l$. 
As another application, the formality of the dg-algebra $C_{sing}^*(\overline{\Mm}_{0,n+1},\F_\l)$, for all $\l\geq n$, is used by Dotsenko in \cite{Dotsenko} to 
estimate the Betti numbers with torsion coefficients of the free loop spaces $L\overline{\Mm}_{0,n+1}$.

Finally, let us mention some other related work. Ekedahl \cite{Ekedahl} proved that for any prime $\l$, there exists a smooth simply connected complex projective surface $X$ with a non-zero Massey product in $H^3(X,\F_\l)$, which is a well-known obstruction to formality. This shows that in general the question of formality with coefficients in a finite field is much more delicate than in the rational case. More recently, Matei \cite{Matei} showed that for any prime $\l$, there is a complement of hyperplane arrangements $X$ in $\C^3$ with a non-zero triple Massey product in $H^2(X,\F_\l)$. Note that Matei's examples have torsion-free pure cohomology. 
We refer the reader to Remark \ref{rem: Matei} for a discussion of this result in relation to our result. Also, in \cite{Salvatore}, Salvatore initiated the study of formality over arbitrary rings for the configuration spaces $F_m(\R^d)$ of $m$ points in $\R^d$. He showed that if $m\leq d$, then $F_m(\R^d)$ is formal over any ring,
but that $F_m(\R^2)$ is not formal over $\F_2$ when $m\geq 4$. 
Note that in all of the above cases, the corresponding spaces are known to be formal over the rationals.
There is an obstruction theory via Hochschild cohomology developed in \cite{Berg}, 
which allows to deduce formality over $\Z$ from formality over $\Q$ in certain quite restricted situations
with torsion-free Hochschild cohomology. For instance, this gives formality over $\Z$ for complex projective spaces. 
More generally, El Haouari \cite{ElHa2} showed that a finite simply connected CW-complex is formal over $\Q$ if and only if it is formal over $\F_\l$ for all primes $\l$ but a finite number. Since $N$-formality implies formality for sufficiently large $N$, our results specify for which primes $\l$ we do have formality over $\F_\l$.

\subsection*{Outline of the paper}

Let us briefly summarize the structure of this paper. In Section \ref{SecModules} we collect the main definitions on weights and Tate modules. We also introduce Tate complexes as more flexible structures than complexes of Tate modules. In Section
\ref{SecEtale} we construct the functor of étale cochains 
from finite type schemes over $K$ to cochain complexes equipped with automorphisms
and compare it with the singular cochains via Artin's theorem.
The next two sections deal with the homologically graded case.
In Section \ref{SecFormalFunctors} we give a criterion of $N$-formality for 
symmetric monoidal functors from chain complexes endowed with a certain $\Z/h\Z$-grading.
We apply this criterion in Section \ref{SecFormalChains}, to prove our main results 
in their chain version. We then address the cohomologically graded case.
Section \ref{SecFormaldgas} is the analogue of Section \ref{SecFormalFunctors} for the case of cohomological dg-algebras. We study Massey products and 
give a criterion of formality for dg-algebras endowed with a $\Z/h\Z$-graded weight decomposition.  These results are applied in Section \ref{SecFormalCochains} to prove our main results in the cochain setting.

\subsection*{Acknowledgments}
We would like to thank Dan Petersen, Paolo Salvatore and Alberto Vezzani for enlightening conversations. We also thank the anonymous referee for several useful comments, and especially for suggesting a better proof of Theorem \ref{theo : Beilinson for Tate} using locally finite dimensional modules.

\subsection*{Notation}

Throughout this paper, we fix a prime number $p$. Denote by $K$ a $p$-adic field (i.e a finite extension of $\Q_p$) and by  $q=p^k$ the cardinality of its residue field. We also assume that a choice of an embedding of $K$ in $\C$ has been made. It follows that there is a preferred choice of embedding of the algebraic closure of $\overline{K}$ in $\C$, namely we can define $\overline{K}$ as the set of complex numbers that are algebraic over $K$. We denote by $\l$ a prime number $\l\neq p$ and write $h$ for the order of $q$ in the group $\F_\l^\times$. 
All schemes over $K$ will be assumed to be separated and of finite type.

\section{Tate modules }\label{SecModules}
In this section we introduce Tate modules and prove a splitting lemma for such objects. We then introduce Tate complexes as more flexible objects than a complexes of Tate modules and compare their categories. This will be used in Section \ref{SecFormalChains}, where we restrict our study to schemes
whose complex of étale chains lands in the category of Tate complexes.

\subsection{Locally finite dimensional $\kk[\varphi]$-modules}

We start with some general algebraic considerations. Let $\kk$ be a field. We consider the category $\Mod_{\kk[\varphi]}$ of $\kk[\varphi]$-modules. We will typically denote an object of this category as a pair $(V,\varphi)$ where $V$ is a $\kk$-vector space and $\varphi$ is a $\kk$-linear endomorphism of $V$. We say that such a module $(V,\varphi)$ is \textit{locally finite dimensional} if any element of $V$ lives in a finite dimensional subspace of $V$ that is stable under $\varphi$. We denote by $\Mod_{\kk[\varphi]}^{lfd}$ the category of such modules.

The following proposition is elementary.

\begin{prop}\label{prop : filtered colimits}
A $\kk[\varphi]$-module $(V,\varphi)$ is locally finite dimensional if and only if it can be written as a filtered colimit of $\kk[\varphi]$-modules whose underlying vector space is finite dimensional.
\end{prop}

\begin{cons}\label{cons : characteristic spaces}
Any locally finite dimensional $\kk[\varphi]$-module $(V,\varphi)$ admits a decomposition as a direct sum of characteristic subspaces
\[V=\bigoplus_{P\in\mathrm{Spec}(\kk[\varphi])}V_P\]
where $V_P=\bigcup_n\Ker P(\varphi)^n$ (we make the abuse of using the same notation for a prime ideal of $\kk[\varphi]$ and a polynomial generating it). Observe that each space in the above decomposition is fixed by $\varphi$.
\end{cons}

\begin{defi} Let $(V,\varphi)$ be a locally finite dimensional $\kk[\varphi]$-module. An element $\lambda$ of $\overline{\kk}$ is said to be an \emph{eigenvalue} if the morphism 
\[\varphi\otimes_{\kk}\overline{\kk}-\lambda\id:V\otimes_{\kk}\overline{\kk}\to V\otimes_{\kk}\overline{\kk}\]
has a non-trivial kernel. A non-zero element in that kernel is called an \emph{eigenvector}.
\end{defi}

The bialgebra structure on $\kk[\varphi]$ induces a symmetric monoidal structure on $\kk[\varphi]$-modules. Explicitly, given $(V,\varphi)$ and $(V',\varphi')$ two $\kk[\varphi]$-modules, then their tensor product is simply given by $(V\otimes_{\kk}V',\varphi\otimes_{\kk}\varphi')$.

\begin{prop}\label{prop : eigenvalues are products}
Let $(V,\varphi)$ and $(V',\varphi')$ be two locally finite dimensional $\kk[\varphi]$-modules. Then their tensor product is locally finite dimensional. Moreover, the eigenvalues of $\varphi\otimes_{\kk}\varphi'$ are exactly the products of eigenvalues of $\varphi$ and eingenvalues of $\varphi'$.
\end{prop}

\begin{proof}
Using Proposition \ref{prop : filtered colimits}, it is easy to show that locally finite dimensional $\kk[\varphi]$-modules are stable under tensor products. 

If $\lambda$ is an eigenvalue of $\varphi$ and $\lambda'$ is an eigenvalue of $\varphi'$, then pick $x$ such that $\varphi(x)=\lambda x$ and $y$ such that $\varphi'(y)=\lambda' y$. We have $(\varphi\otimes\varphi')(x\otimes y)=\lambda\lambda' (x\otimes y)$.

Conversely, let $\gamma$ be an eigenvalue of $\varphi\otimes \varphi'$. Let $x$ be an eigenvector. We can pick a finite dimensional subspace of $V\otimes V'$ that contains $x$ and is stable under $\varphi\otimes\varphi'$. Enlarging this space if necessary, we can further assume that this subspace is of the form $U\otimes U'$ with $U\subset V$ stable under $\varphi$ and $U'\subset V'$ stable under $\varphi'$ and $U$ and $U'$ finite dimensional. We thus have reduced the situation to the finite dimensional case, for which the result is standard.
\end{proof}

\subsection{Tate modules}

Recall that $q=p^k$ and that $\l\neq p$ is a prime number.

\begin{defi}
Let $(V,\varphi)$ be a locally finite dimensional $\F_\l[\varphi]$-module. We say that the pair $(V,\varphi)$ is a \textit{$q$-Tate module}
if the only eigenvalues of $\varphi$ are powers of $q$. A $q$-Tate module is said to be \textit{pure of weight $n$} if the only eigenvalue is $q^n$.
\end{defi}

\begin{rem}
The name Tate module comes from the fact that those objects are constructed from the Tate twists $\F_\l(n)$ defined below. Tate modules in our sense have little to do with the Tate module of an abelian variety. We hope that this does not lead to any confusion.
\end{rem}

In practice, we will drop the mention of $q$ and just say Tate module. Denote by $\TMod$ the category of Tate modules. Morphisms in these categories are given by morphisms of $\F_\l[\varphi]$-modules.

\begin{example}
We denote by $\F_\l(n)$ the one-dimensional $\F_\l$-vector space equipped with the automorphism $\varphi=q^n\id$. Then $\F_\l(n)$ is a Tate module of pure weight $n$.
\end{example}

\begin{lemm}\label{TModsmc}
The category $\TMod$ is an abelian subcategory of $\Mod_{\F_\l[\varphi]}$. Moreover, the tensor product of $\Mod_{\F_\l[\varphi]}$ restricts to $\TMod$ and is exact in both variables.
\end{lemm}

\begin{proof}
Obviously a submodule of a locally finite dimensional $\F_\l[\varphi]$-module is locally finite dimensional. If $V\subset W$ is an inclusion of locally finite dimensional $\F_\l[\varphi]$-modules, then any eigenvalue of $V$ is an eigenvalue of $W$. It follows that $\TMod$ is stable under taking subobjects.
Now consider a short exact sequence of $\F_\l[\varphi]$-modules
\[0\to V'\to V\to V''\to 0\]
Assume that $V$ is locally finite dimensional. Then for an element $x$ of $V''$, we can pick any lift $\overline{x}$ in $V$. By assumption, $\overline{x}$ lives in a finite dimensional submodule of $V$. The image of that submodule in $V''$ is a finite dimensional submodule that contains $x$. This implies that $V''$ is locally finite dimensional.

Now assume  that $V$ is a Tate module. Let $\lambda\in \overline{\F}_\l$ be an eigenvalue of $V''$. Let us pick a subspace $U''$ of $V''\otimes_{\F_{\l}}\overline{\F}_\l$ that is finite dimensional, stable under the action of $\varphi$ and contains $x$. We can then pick a finite dimensional subspace $U$ of $V\otimes_{\F_{\l}}\overline{\F}_\l$ that is finite dimensional, stable under the action of $\varphi$ and surjects onto $U''$. Let us denote by $U'$ the kernel of the surjection $U\to U''$. We thus have a short exact sequence
\[0\to U'\to U\to U''\to 0\]
of finite dimensional $\overline{\F}_\l[\varphi]$-modules. In this finite dimensional situation, it is classical that all the eigenvalues of $U''$ are eigenvalues of $U$. In particular $\lambda$ is a power of $q$ as desired. It follows that $V''$ must be a Tate module. This concludes the proof that $\TMod$ is an abelian subcategory of $\Mod_{\F_\l[\varphi]}$. 

The fact that $\TMod$ is stable under tensor product follows immediatly from Proposition \ref{prop : eigenvalues are products}. Finally, the exactness of the tensor product is already true in $\Mod_{\F_\l[\varphi]}$.
\end{proof}

\subsection{Deligne splitting}

If $\Aa$ is a symmetric monoidal abelian category, we denote by $gr^{(h)}\Aa$ the category of $\Z/h\Z$-graded objects of $\Aa$. It is a symmetric monoidal category, with the tensor product given by
\[(A\otimes B)^n:=\sum_{a+b\equiv n \modh} A^a\otimes B^b.\]
The functor $U:gr^{(h)}\Aa\lra \Aa$ obtained by forgetting the degree is symmetric monoidal.

The following Lemma is an analogue of Deligne's splitting for mixed Hodge structures in the setting of Tate modules.

\begin{lemm}\label{splittingFrobT}
The functor 
$\Pi:\TMod\lra \Mod_{\F_\l}$  defined 
by $(V,\varphi)\mapsto V$ admits a factorization
\[
\xymatrix@R=4pc@C=4pc{
\TMod\ar[r]^{G}\ar[dr]_{\Pi}&gr^{(h)}\Mod_{\F_\l}\ar[d]^{U}\\
&\Mod_{\F_\l}
}
\]
into symmetric monoidal functors, where 
$G(V,\varphi)^n$ is the space $V_P$ associated to the prime ideal $P=\langle \varphi-q^n\rangle$.
\end{lemm}

\begin{proof}
A priori $U\circ G(V)$ is a subspace of $\Pi(V)$. We will check that the spaces $V_P$ are zero for $P$ a prime polynomial whose roots are not powers of $q$. Let $P$ be such a polynomial and assume that $ V_P\neq 0$. Let $W\subset V_P$ be a non-zero finite dimensional subspace that is stable under $\varphi$. Then the minimal polynomial of the restriction of $\varphi$ to $W$ is a power of $P$. This means that $\varphi$ has an eigenvalue that is a root of $P$ which is absurd. In conclusion, we see that $U\circ G(V)\cong \Pi(V)$.

It remains to check that $G$ is a symmetric monoidal functor. Let $(V,\varphi)$ and $(V',\varphi')$ two locally finite dimensional $\F_\l[\varphi]$-modules. Then, we claim that we have an inclusion
\begin{equation}\label{eqn : inclusion}
G(V\otimes V',\varphi\otimes\varphi')^n\subset \bigoplus_{p+q=n}G(V,\varphi)^p\otimes G(V',\varphi')^q
\end{equation}
Assuming this is true for the moment, we can sum these inclusions over all choices of $n$ and we see that they must all be equalities. This shows that $G$ is indeed symmetric monoidal. 

It remains to prove that inclusion (\ref{eqn : inclusion}) is indeed true. It is straightforward if $V$ and $V'$ are finite dimensional. In general, let us pick an element $x$ of $G(V\otimes V',\varphi\otimes\varphi')^n$. Then $x$ lives in a subspace $U\otimes U'$ of $V\otimes V'$ with $U\subset V$ and $U'\subset V'$ two finite dimensional $\F_\l[\varphi]$-submodules. This reduces the situation to the finite dimensional case.
\end{proof}

\subsection{Tate complexes}

We will consider more flexible structures than the ones provided by chain complexes of Tate modules.

\begin{defi}
A \textit{Tate complex} is a chain complex $(C,\varphi)$ in $\Mod_{\F_\l[\varphi]}^{lfd}$ whose homology is a Tate module in each degree.
\end{defi}

We denote the category of Tate complexes by $\TComp$. The tensor product of locally finite dimensional $\F_\l[\varphi]$-modules induces a symmetric monoidal structure on this category.

We have an obvious symmetric monoidal inclusion
\[\iota: \ch(\TMod)\to\TComp\]

\begin{theo}\label{theo : Beilinson for Tate}
There exists a symmetric monoidal functor
\[\pi:\TComp\to \ch(\TMod)\]
such that 
\begin{enumerate}
\item The composite $\pi\circ\iota$ is naturally isomorphic to the identity.
\item The composite $\iota\circ\pi$ is naturally quasi-isomorphic to the identity.
\end{enumerate}
\end{theo}

\begin{proof}
For a Tate complex $C$, we define
\[\pi(C_*)=\bigoplus_{P\in W } (C_*)_P\] where $W$ is the set of prime polynomials whose roots are powers of $q$. Clearly, this summand of $C_*$ is a subcomplex and is functorial in $C_*$. It is also not hard to check that $\pi$ is a symmetric monoidal functor (this is a variant of the proof of Lemma \ref{splittingFrobT}). From this description, it is obvious that $\pi\circ\iota$ is isomorphic to the identity. On the other hand, for a Tate complex, we have a splitting of complexes $C_*=\pi(C_*)\oplus R$. The complex $R$ has only eigenvalues that are not powers of $q$. It follows that it is also the case for its homology. Since $C_*$ is a Tate complex, this forces $R$ to be acyclic. This concludes the proof.
\end{proof}

\section{Functor of étale cochains}\label{SecEtale}

In this section, we define the functor of étale cochains from the category $\cat{Sch_K}$ of schemes over $K$
that are separated and of finite type to the category of Tate complexes. The dg-algebra of étale cochains may be defined in the context of sheaf theory, using the global sections functor $\mathbb{R}\Gamma(X_{et},-)$
where $X_{et}$ is the étale site of a scheme $X$. There is a model for
$\mathbb{R}\Gamma(X_{et},\F_\ell)$ that is an $E_\infty$-algebra by construction  (see for instance \cite{Petersencompo}, \cite{ChCiSheaves} and \cite{PetersenRemark} for a treatment of sheaves of $E_\infty$-algebras on topological spaces and \cite{RoigSite} for multiplicative sheaves on a site). However, we have made the choice of using an alternative approach to the \'etale cochain functor, using the theory of the \'etale homotopy type of Artin-Mazur and Friedlander (see \cite{ArtinMazur,friedlanderetale}). This construction associates to a scheme a profinite simplicial set. We can then apply the singular cochain functor and obtain an $E_{\infty}$-algebra of \'etale cochains. This detour might seem slightly unnatural to an algebraic geometer. It is indeed true that all our examples that come from algebraic geometry could be dealt with using the more classical sheaf-theoretic approach. However, one of our main example of applications is the little $2$-disks operad (Theorem \ref{littledisksformal}). In that case, we are given an operad in profinite simplicial sets with an action of the Grothendieck-Teichm\"uller group. The advantage of the approach that we have chosen in this paper is that this example and the algebro-geometric example will be treated in a completely parallel fashion.


Recall that a profinite set is a compact Hausdorff totally disconnected topological space. Alternatively, this is a pro-object in the category of finite sets. We denote by $\pS$ the category of profinite spaces. Its objects are simplicial profinite sets or alternatively pro-objects in the category of simplicial sets that are degreewise finite and coskeletal \cite[Proposition 7.4.1]{barneapro}. 

Given a profinite set $X$ and a finite commutative ring $R$, we denote by $R^X$, the ring of continuous maps $X\to R$ where $R$ is given the discrete topology.  If $X$ is a profinite space, we denote by $S^\bullet(X,\F_\l)$ the cosimplicial commutative $\F_\l$-algebra given by $\F_\l^{X_n}$ in degree $n$. We also denote by $C^*(X,\F_\l)$ the result of the application of the Dold-Kan equivalence to $S^\bullet(X,\F_\l)$. By the fact that the Dold-Kan construction is lax monoidal, we deduce that the resulting object is a dg-algebra. Note however, that it is not a commutative dg-algebra (instead it is naturally an algebra over the Barrat-Eccles operad, see \cite{bergercombinatorial}).

Given a profinite space $X$, we denote by $H^*(X,\F_\l)$, the cohomology of $C^*(X,\F_\l)$ which is naturally a commutative graded algebra over $\F_\l$.

In this context, we also have a K\"unneth formula given by the following proposition.

\begin{prop}\label{prop : pro Kunneth}
Let $X$ and $Y$ be two profinite spaces. The canonical map
\[S^\bullet(X,\F_\l)\otimes S^\bullet(Y,\F_\l)\to S^\bullet(X\times Y,\F_\l)\]
is an isomorphism. In particular, there is a Künneth isomorphism
\[H^*(X,\F_\l)\otimes H^*(Y,\F_\l)\cong H^*(X\times Y,\F_\l)\]
\end{prop}

\begin{proof}
It suffices to check it in each cosimplicial degree. Thus it is enough to prove that for $X$ and $Y$ two profinite \emph{sets}, the canonical map
\[(\F_\l^X)\otimes (\F_\l^Y)\to \F_\l^{X\times Y}\]
is an isomorphism. Since the tensor product commutes with filtered colimits, we can assume that $X$ and $Y$ are finite sets in which case the statement is straightforward.
\end{proof}

\begin{defi}\label{defi l complete equivalence}
We say that a map $X\to Y$ of profinite spaces is an \textit{$\l$-complete equivalence} if the induced map
$H^*(Y,\Z/\l)\to H^*(X,\Z/\l)$
is an isomorphism.
\end{defi}

Recall from Quick \cite{quickcontinuous} that there exists a functor $\on{Et}$ from schemes over $K$ to profinite spaces equipped with a continuous action of the absolute Galois group of $K$. Since $K$ is a $p$-adic field with residue field $\F_q$, there is a surjective map
\[\on{Gal}(\overline{K}/K)\to \on{Gal}(\overline{\F}_q/\F_q).\]
The target of this map is isomorphic to $\widehat{\Z}$ generated by the Frobenius. We make once and for all a choice of a lift of the Frobenius in $\on{Gal}(\overline{K}/K)$ and this defines a continuous morphism
\[\widehat{\Z}\to \on{Gal}(\overline{K}/K).\]
We can restrict the Galois action on $\on{Et}(X)$ along this morphism we get a functor 
\[\on{Et}:\cat{Sch}_K\lra \pS^{\widehat{\Z}}.\]
This functor has the property that there is an isomorphism 
\[H^*(\on{Et}(X),\F_\l)\cong H^*_{et}(X_{\overline{K}},\F_\l)\] where the $\Z$-action on the left-hand side corresponds to the action of the chosen Frobenius lift on the right-hand side.

\begin{prop}\label{prop: Etale Kunneth}
Let $X$ and $Y$ be two $K$-schemes of finite type. The canonical map
\[\on{Et}(X\times Y)\to \on{Et}(X)\times\on{Et}(Y)\]
is an $\l$-complete equivalence.
\end{prop}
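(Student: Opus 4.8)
The plan is to deduce the statement from the comparison between the cohomology of the profinite space $\on{Et}(-)$ and the étale cohomology of the base change to $\overline{K}$, reducing everything to the classical Künneth formula in étale cohomology. By definition of an $\l$-complete equivalence, it suffices to prove that the canonical map induces an isomorphism
\[
H^*(\on{Et}(X)\times\on{Et}(Y),\Z/\l)\lra H^*(\on{Et}(X\times Y),\Z/\l).
\]

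First I would compute the source. Applying Proposition \ref{prop : pro Kunneth} to the profinite spaces $\on{Et}(X)$ and $\on{Et}(Y)$ yields an isomorphism of cosimplicial $\Z/\l$-algebras
\[
S^\bullet(\on{Et}(X)\times\on{Et}(Y),\Z/\l)\cong S^\bullet(\on{Et}(X),\Z/\l)\otimes_{\Z/\l}S^\bullet(\on{Et}(Y),\Z/\l).
\]
Passing to normalized cochain complexes, the Eilenberg--Zilber theorem identifies $C^*(\on{Et}(X)\times\on{Et}(Y),\Z/\l)$ with the tensor product $C^*(\on{Et}(X),\Z/\l)\otimes_{\Z/\l}C^*(\on{Et}(Y),\Z/\l)$ up to quasi-isomorphism, and since $\Z/\l$ is a field the algebraic Künneth theorem produces no $\on{Tor}$ terms, giving
\[
H^*(\on{Et}(X)\times\on{Et}(Y),\Z/\l)\cong H^*(\on{Et}(X),\Z/\l)\otimes_{\Z/\l}H^*(\on{Et}(Y),\Z/\l).
\]
Under this identification, the displayed map is the cohomology cross product, since the canonical map is assembled from $\on{Et}$ applied to the two projections of $X\times Y$.

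Next I would transport the question to étale cohomology. Using the natural and multiplicative comparison isomorphism $H^*(\on{Et}(-),\Z/\l)\cong H^*_{et}((-)_{\overline{K}},\Z/\l)$, the cross product above is identified with the external product
\[
H^*_{et}(X_{\overline{K}},\Z/\l)\otimes_{\Z/\l}H^*_{et}(Y_{\overline{K}},\Z/\l)\lra H^*_{et}((X\times_K Y)_{\overline{K}},\Z/\l),
\]
where I use that $(X\times_K Y)_{\overline{K}}=X_{\overline{K}}\times_{\overline{K}}Y_{\overline{K}}$. As $\overline{K}$ is an algebraically closed field of characteristic zero and $X,Y$ are separated of finite type, all the étale cohomology groups appearing here are finite-dimensional $\F_\l$-vector spaces, so the Künneth formula in étale cohomology shows that this external product is an isomorphism, completing the proof.

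The step I expect to be delicate is the compatibility in the middle paragraph: one must know that Quick's identification $H^*(\on{Et}(X),R)\cong H^*_{et}(X_{\overline{K}},R)$ is not only an isomorphism of graded groups but is natural in $X$ and compatible with cup products, so that it intertwines the cross product coming from Proposition \ref{prop : pro Kunneth} with the étale external product. This is a formal property of the étale homotopy type, and once it is granted the only substantive input is the étale Künneth formula over the separably closed field $\overline{K}$, which applies precisely because the relevant cohomology groups are finite.
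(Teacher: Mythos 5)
Your proof is correct and takes essentially the same route as the paper: the paper's own (much terser) proof likewise combines Proposition \ref{prop : pro Kunneth} with the identification $H^*(\on{Et}(X),R)\cong H^*_{et}(X_{\overline{K}},R)$ to reduce the claim to the K\"unneth theorem in \'etale cohomology, cited from SGA 4.5. The intermediate steps you make explicit (Eilenberg--Zilber, the field-coefficient K\"unneth with no Tor terms, and the naturality and multiplicativity of Quick's comparison isomorphism) are exactly what the paper leaves implicit, so there is no gap.
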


\begin{proof}
In light of Proposition \ref{prop : pro Kunneth} and of the identification in the paragraph just above, we have to prove that the canonical map
\[H^*_{et}(X_{\overline{K}},\F_\l)\otimes H^*_{et}(Y_{\overline{K}},\F_\l)\to H^*_{et}((X\times Y)_{\overline{K}},\F_\l)\]
is an isomorphism. This is just the Künneth theorem for étale cohomology \cite[Cor. 1.11]{sga4.5}.
\end{proof}

\begin{cons}
We define the functor $C^*_{et}(-,\F_\l)$ on finite type schemes over $K$ as the following composition
\[\cat{Sch}_K\xrightarrow{\,\on{Et}\,}\pS^{\Z}\xrightarrow{C^*(-,\F_\l)}(\ch(\Mod_{\F_\l[\varphi]}^{lfd}))\op\]
This lands in $\ch(\Mod_{\F_\l[\varphi]}^{lfd})$ and not merely in $\ch(\Mod_{\F_\l[\varphi]})$ because of Proposition \ref{prop : filtered colimits} and the observation that for a profinite set $X$, the vector space $\F_\l^X$ is a filtered colimit of finite dimensional vector spaces. 
\end{cons}

\begin{prop}\label{etalproduct}
This functor is oplax monoidal and the natural transformation
\[C^*_{et}(X,\F_\l)\otimes C^*_{et}(Y,\F_\l)\to C^*_{et}(X\times Y,\F_\l)\]
is a quasi-isomorphism.
\end{prop}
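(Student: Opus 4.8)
The plan is to obtain the oplax monoidal structure formally, by composing the oplax structures carried by the two functors $\on{Et}$ and $C^*(-,\kk)$, and then to identify the resulting comparison map with the one in the statement and verify it is a quasi-isomorphism by factoring it through the two Künneth results already established.

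First I would note that $\on{Et}\colon\cat{Sch}_K\to\pS^{\Z}$ is canonically oplax monoidal: both sides carry the cartesian monoidal structure, so the two projections of $X\times Y$ induce a natural comparison map $\on{Et}(X\times Y)\to\on{Et}(X)\times\on{Et}(Y)$, which serves as the oplax structure map. Next I would make $C^*(-,\kk)\colon\pS^{\Z}\to(\cat{EComp}_\kk)\op$ oplax monoidal. At the cosimplicial level, Proposition \ref{prop : pro Kunneth} provides a natural isomorphism $S^\bullet(Z,\kk)\otimes_\kk S^\bullet(W,\kk)\cong S^\bullet(Z\times W,\kk)$, and since the Dold--Kan/conormalization functor is lax monoidal its structure map composes with this isomorphism to give a natural map $C^*(Z,\kk)\otimes C^*(W,\kk)\to C^*(Z\times W,\kk)$ in $\cat{EComp}_\kk$. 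Read inside $(\cat{EComp}_\kk)\op$ this is an oplax structure map $C^*(Z\times W,\kk)\to C^*(Z,\kk)\otimes C^*(W,\kk)$, so $C^*(-,\kk)$ is oplax monoidal as a functor to $(\cat{EComp}_\kk)\op$. A composite of oplax monoidal functors is again oplax monoidal, so $C^*_{et}(-,\kk)$ is oplax monoidal, and unwinding the composite identifies its structure map with precisely the transformation $C^*_{et}(X,\kk)\otimes C^*_{et}(Y,\kk)\to C^*_{et}(X\times Y,\kk)$ of the statement.

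To see that this map is a quasi-isomorphism, I would factor it as
\[
C^*(\on{Et}(X),\kk)\otimes C^*(\on{Et}(Y),\kk)\longrightarrow C^*(\on{Et}(X)\times\on{Et}(Y),\kk)\longrightarrow C^*(\on{Et}(X\times Y),\kk).
\]
The first arrow is the lax comparison of the conormalization applied to the isomorphism of Proposition \ref{prop : pro Kunneth}; by the Eilenberg--Zilber theorem this comparison is a quasi-isomorphism. The second arrow is $C^*(-,\kk)$ applied to the canonical map $\on{Et}(X\times Y)\to\on{Et}(X)\times\on{Et}(Y)$, which by Proposition \ref{prop: Etale Kunneth} is an $\l$-complete equivalence; the preceding proposition on $\l$-complete equivalences then shows it induces an isomorphism on $H^*(-,\kk)$ for both $\kk=\F_\l$ and $\kk=\Q_\l$, hence is a quasi-isomorphism. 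The composite of two quasi-isomorphisms is a quasi-isomorphism, giving the claim.

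Finally, I would record that every map used above --- the projection-induced comparison, the Künneth isomorphism, and the lax comparison of the conormalization --- is natural in its arguments and therefore commutes with the chosen Frobenius endomorphisms, so all of them are genuine morphisms in $\cat{EComp}_\kk$; the associativity and unit coherences of the oplax structure descend formally from those of $\on{Et}$ and of $C^*$. The only non-formal ingredient is the Eilenberg--Zilber theorem, so the one point deserving care is that the lax monoidal comparison of the conormalization is a quasi-isomorphism; everything else is bookkeeping organized around the two Künneth statements already proved.
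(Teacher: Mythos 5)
Your proposal is correct and follows essentially the same route as the paper, which simply observes that $C^*_{et}(-,\kk)$ is a composite of two oplax monoidal functors and that the quasi-isomorphism claim follows by combining Proposition \ref{prop: Etale Kunneth} with Proposition \ref{prop : pro Kunneth}. You have merely made explicit the details the paper leaves implicit --- the factorization of the comparison map, the Eilenberg--Zilber quasi-isomorphism for the Dold--Kan lax structure, and the use of the proposition on $\l$-complete equivalences to pass from $\F_\l$-cohomology to $\kk$-coefficients.
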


\begin{proof}
The first claim follows by observing that this functor is obtained by composing two oplax monoidal functors. The second claim follows by combining Proposition \ref{prop: Etale Kunneth} and Proposition \ref{prop : pro Kunneth}. 
\end{proof}

\begin{cons}\label{cons: etale cochains}
This proposition implies that the functor $C^*_{et}(-,\F_\l)$ descends 
to a symmetric monoidal $\infty$-functor
\[\inn(\cat{Sch}_K)\op\lra\ich(\Mod_{\F_\l[\varphi]}^{lfd})\]
that we still denote by $C^*_{et}(-,\F_\l)$.
\end{cons}

We will need to compare the étale chains and cochains to singular chains and cochains. For $X$ a scheme over $K$, we denote by $X_{an}$ the complex analytic space underlying $X_\C$ (recall that $\overline{K}$ comes with a preferred embedding into $\C$).

\begin{prop}\label{prop : comparison}
There is an equivalence 
\[C^*((-)_{an},\F_\l)\simeq C^*_{et}(-,\F_\l)\]
in the category of symmetric monoidal $\infty$-functors from $\inn(\cat{Sch}_K)$ to $\ich(\F_\l)$.
\end{prop}

\begin{proof}
We construct a zig-zag of symmetric monoidal natural transformation of functors from schemes over $K$ to cochain complexes connecting $C^*_{sing}((-)_{an},\F_\l)$ and $C^*_{et}(-,\F_\l)$. The middle term in this zig-zag is the functor $C^*_{et}((-)_{\C},\F_\l)$. There is a map 
\[C^*_{et}(X_{\C},\F_\l)\to C^*_{sing}(X_{an},\F_\l).\]
This is the comparison between étale and singular cohomology which is a quasi-isomorphism for any scheme $X$
(see \cite[XVI, Theorem 4.1]{sga4} or \cite[Theorem 8.4]{friedlanderetale} for \'etale homotopy types). There is also a base change map
\[C^*_{et}(X_{\C},\F_\l)\to C^*_{et}(X_{\overline{K}},\F_\l)=C^*_{et}(X,\F_\l)\]
which is also a quasi-isomorphism for any scheme $X$
(see  \cite[XII, Corollary 5.4]{sga4} for a proof in the proper case, see 
\cite[p. 231, Corollary 4.3]{Milne} for the general case).
\end{proof}

\section{Formality criteria for symmetric monoidal functors}\label{SecFormalFunctors}

Let $\Aa$ be a symmetric monoidal abelian category whose tensor product is exact in both variables. In this section, we give a formality criterion for symmetric monoidal functors of chain complexes of objects of $\Aa$ equipped with an extra grading.

\begin{defi}
Let $N$ be an integer. A morphism of chain complexes $f:A\to B \in\ch(\Aa)$ is called \textit{$N$-quasi-isomorphism} if the induced morphism in homology $H_i(f):H_i(A)\to H_i(B)$ is an isomorphism for all $i\leq N$.
\end{defi}

\begin{defi}\label{defFormalFunctor}
Let $\Cc$ be a symmetric monoidal category and
$F:\Cc\lra \ch(\Aa)$ a lax symmetric monoidal functor.
Then $F$ is said to be a \textit{formal} (resp. \textit{$N$-formal}) \textit{lax symmetric monoidal} functor if 
there is a string of monoidal natural transformations of lax symmetric monoidal functors 
\[F\stackrel{\Phi_1}{\Longleftarrow}F_1\Longrightarrow \cdots \Longleftarrow F_n\stackrel{\Phi_n}{\Longrightarrow}H_*\circ F\] such that for every object $X$ of $\Cc$, the morphisms $\Phi_i(X)$ are quasi-isomorphisms (resp. $N$-quasi-isomorphisms).
\end{defi}

\begin{defi}
Let $\Cc$ be a symmetric monoidal category and $F:\inn(\Cc)\to\ich(\Aa)$ a lax symmetric monoidal functor
(in the $\infty$-categorical sense). We say that $F$ is a \textit{formal} (resp. \textit{$N$-formal}) \textit{lax symmetric monoidal $\infty$-functor} if $F$ and $H_*\circ F$ are quasi-isomorphic (resp. $N$-quasi-isomorphic) as lax monoidal functors from $\inn(\Cc)$ to $\ich(\Aa)$.
\end{defi}

\begin{rem}\label{rem: N formality}
Denote by $t_{\leq N}:\chp(\Aa)\lra \chp(\Aa)$ the truncation functor defined by
\[(t_{\leq N}A)_n:=\left\{ 
\begin{array}{ll}
A_n& \mathrm{if}\; n<N\\
A_N/\on{\Img}(d_{N+1})&\mathrm{if}\; n=N\\
0&\mathrm{if}\; n>N
\end{array}\right..\]
This functor can be checked to be lax monoidal. Since it preserves quasi-isomorphisms, it descends to a lax monoidal $\infty$-functor $\ichp(\Aa)\to \ichp(\Aa)$. Moreover, for any lax monoidal functor $F:\Cc\to\chp(\Aa)$ the canonical map $F\to t_{\leq N}F$ is an $N$-quasi-isomorphism of lax monoidal functors. Likewise in the $\infty$-categorical case, given $F:\inn(\Cc)\to\ichp(\Aa)$ a lax monoidal $\infty$-functor, the canonical map $F\to t_{\leq N}F$ is an $N$-quasi-iomorphism of lax monoidal $\infty$-functors. From this observation, we deduce that, in both cases, $F$ is $N$-formal if and only if $t_{\leq N}F$ is formal.
\end{rem}

\begin{rem}\label{rem : cohomological grading}
The previous remark fails if we use the cohomological grading. In that case, we can define $N$-formality of an algebraic structure in cochain complexes as being a zig-zag of maps that induce isomorphisms in cohomology up to degree $N$. However, this notion cannot be interpreted as usual formality of the truncation since in the cohomological case, the truncation will not be lax monoidal (it is in fact oplax monoidal).
\end{rem}

A formal functor immediatly yields formality results for operads by the following Proposition.

\begin{prop}
Let $\Cc$ be a symmetric monoidal category and $P$ be an operad in sets.
\begin{enumerate}
\item Let $F:\Cc\lra \chp(\mathcal{A})$ be a formal (resp. $N$-formal) lax symmetric monoidal functor. Then for any $P$-algebra $X$ in $\Cc$, the dg-$P$-algebra $F(X)$ is formal (resp. $N$-formal) in $\Cc$.
\item Let $F:\inn(\Cc)\lra \ichp(\mathcal{A})$ be a formal (resp. $N$-formal) lax symmetric monoidal functor. Then for any $P$-algebra $X$ in $\Cc$, we have an equivalence $F(X)\simeq H(F(X))$ (resp. $t_{\leq N}F(X)\simeq t_{\leq N}H(F(X))$ in the $\infty$-category $\ialg_P(\ich(\kk))$.
\end{enumerate}
\end{prop}

\begin{proof}
This is immediate. See for instance \cite[Proposition 2.5.5]{santosmoduli}.
\end{proof}

\begin{rem}
Formality in the $\infty$-categorical sense is in general weaker than formality in the model categorical sense. Our paper gives a method for proving formality in the $\infty$-categorical sense. When the model structure on $\alg_P(\ch(\kk))$ happens to model the $\infty$-category $\ialg_P(\ich(\kk))$, this implies formality in the model categorical sense. But in the other cases, the model category $\alg_P(\ch(\kk))$ does not model the correct homotopy theory so the question of having formality is not particularly interesting. In any case, we have the following rigidification result due to Hinich that gives conditions on $P$ under which the two notions are equivalent.
\end{rem}

\begin{prop}\label{prop: rigidification positive char}
Let $\kk$ be a field. Let $P$ be an operad in sets that is admissible and $\Sigma$-cofibrant. Let $A$ be a $P$-algebra in $\ch(\kk)$. If $A$ is formal in $\ialg_P(\ich(\kk))$ (resp. $N$-formal  in $\ialg_P(\ichp(\kk))$), then $A$ is formal in $\alg_P(\ch(\kk))$ (resp. $N$-formal in $\alg_P(\chp(\kk))$).
\end{prop}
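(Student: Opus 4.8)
The plan is to reduce the statement to a rectification theorem comparing strict and homotopy-coherent $P$-algebras. Concretely, I claim that for a homotopically sound operad $P$ the model category $\alg_P(\ch(\kk))$ --- whose existence is guaranteed by admissibility of $P$ --- presents the $\infty$-category $\ialg_P(\ich(\kk))$; that is, the $\infty$-categorical localization of $\alg_P(\ch(\kk))$ at the quasi-isomorphisms is equivalent to $\ialg_P(\ich(\kk))$, and likewise with $\chp(\kk)$ in place of $\ch(\kk)$. This is exactly the sort of statement provided by the rectification theorems for algebras over $\Sigma$-cofibrant operads in well-behaved symmetric monoidal model categories (as developed by Pavlov--Scholbach and by Hinich). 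The two halves of the hypothesis ``homotopically sound'' play complementary roles: admissibility is what produces the transferred model structure, so that $\alg_P(\ch(\kk))$ is a model category at all, while $\Sigma$-cofibrancy is what guarantees that strict $P$-algebras faithfully compute $\infty$-$P$-algebras. Concretely, the freeness of each $\Sigma_n$-action on the operations of $P$ forces the $\Sigma_n$-coinvariants appearing in the free $P$-algebra monad to agree with their derived (homotopy) versions, so that the free--forgetful adjunction is a genuine Quillen presentation of the monad whose algebras are the homotopy-coherent ones.

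Granting this rectification equivalence, the formal case is immediate. By definition, $A$ being formal in $\ialg_P(\ich(\kk))$ means that $A$ and $H_*(A)$ --- the latter equipped with its induced $P$-algebra structure --- are equivalent objects of $\ialg_P(\ich(\kk))$. Transporting this equivalence across the rectification, $A$ and $H_*(A)$ become isomorphic in the homotopy category $\Ho(\alg_P(\ch(\kk)))$. A standard feature of model categories then upgrades such an isomorphism to an honest zig-zag of weak equivalences: after cofibrantly replacing $A$ and $H_*(A)$ (recall that over a field every object of $\ch(\kk)$ is already fibrant, so every $P$-algebra is fibrant for the transferred structure), an isomorphism in the homotopy category between bifibrant objects is realized by a weak equivalence, and unwinding the cofibrant replacements yields a zig-zag of quasi-isomorphisms of $P$-algebras connecting $A$ to $H_*(A)$. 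This is precisely formality of $A$ in $\alg_P(\ch(\kk))$; the identical argument, with $\chp(\kk)$ replacing $\ch(\kk)$, gives the corresponding statement in $\alg_P(\chp(\kk))$.

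The $N$-formal case then follows by combining this with the truncation functor of Remark \ref{rem: N formality}. Since $t_{\leq N}:\chp(\kk)\to\chp(\kk)$ is lax symmetric monoidal and $P$ is an operad in sets, $t_{\leq N}$ sends $P$-algebras to $P$-algebras, both strictly and at the level of $\infty$-categories, and it preserves quasi-isomorphisms. By Remark \ref{rem: N formality}, $A$ is $N$-formal in $\ialg_P(\ichp(\kk))$ precisely when $t_{\leq N}A$ is formal there; applying the formal case (in the $\chp(\kk)$ setting) to the $P$-algebra $t_{\leq N}A$ produces formality of $t_{\leq N}A$ in $\alg_P(\chp(\kk))$, which by the same remark is exactly $N$-formality of $A$ in $\alg_P(\chp(\kk))$.

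The main obstacle is the rectification input of the first paragraph; everything afterwards is a formal consequence. I would isolate it as a separate lemma and prove it by verifying that $\ch(\kk)$ (and $\chp(\kk)$) meets the hypotheses --- combinatorial, symmetric monoidal, satisfying the monoid axiom, with cofibrant unit --- of one of the available rectification theorems, and then checking that $\Sigma$-cofibrancy of $P$ is exactly what makes the derived analysis of the free $P$-algebra functor go through. The delicate point is the passage between the strict monad and its homotopy-coherent shadow, which is precisely where positive characteristic would otherwise cause trouble (cf. the failure of Proposition \ref{prop: rigidification} without a soundness hypothesis) and where $\Sigma$-cofibrancy of $P$ is what saves the argument.
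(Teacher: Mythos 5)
Your proposal is correct and follows essentially the same route as the paper: the paper's proof simply invokes Hinich's rectification theorem (\cite{Hinichrectif}) to identify the $\infty$-category underlying the transferred model structure on $\alg_P(\ch(\kk))$ with $\ialg_P(\ich(\kk))$, deduces formality by transporting the equivalence, and handles $N$-formality via Remark \ref{rem: N formality}, exactly as you do. Your additional elaboration (the zig-zag extraction from an isomorphism in the homotopy category, and the roles of admissibility versus $\Sigma$-cofibrancy) is a faithful unwinding of what the paper leaves implicit.
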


\begin{proof}
Under those assumptions Hinich shows in \cite{Hinichrectif} that the $\infty$-category underlying the model structure on $\alg_P(\ch(\kk))$ for any field is equivalent to the $\infty$-category $\ialg_P(\ich(\kk))$. This immediately yields the result. The statement about $N$-formality follows from  Remark \ref{rem: N formality}. 
\end{proof}

For the rest of this section we fix a positive integer $h$ and a positive rational number $\alpha$ with $\alpha<h$.

\begin{defi}\label{defmpure}Denote by $\ch(gr^{(h)}\Aa)^\apure$ 
the full subcategory of $\ch(gr^{(h)}\Aa)$ given by those $\Z/h$-graded complexes $A=\bigoplus A_n^p$ with \textit{$\alpha$-pure homology}:
\[H_n(A)^p=0\text{ for all }p\not\equiv \alpha n \modh.\]
\end{defi}
 
\begin{prop}\label{formal_functor_abstract_chains}
Let $N=\floor{(h-1)/\alpha}$.
The functor \[U:\chp(gr^{(h)}\Aa)^\apure\lra \chp(\Aa)\] 
defined by forgetting the degree is $N$-formal as a lax symmetric monoidal functor.
\end{prop}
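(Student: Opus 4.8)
The plan is to treat this as the cyclic-grading refinement of the $\Z$-graded statement Proposition \ref{prop: purity implies formality}, where $\alpha$-purity already forces \emph{full} formality, and to pinpoint how passing from $\Z$ to $\Z/m$ degrades this to $N$-formality. First I would invoke Remark \ref{rem: N formality}: since $U$ lands in non-negatively graded complexes, $U$ is $N$-formal if and only if $t_{\leq N}\circ U$ is formal. As $t_{\leq N}$ only affects the chain degree it commutes with forgetting the weight, so after replacing each $A$ by $t_{\leq N}A$ I may assume all homology is concentrated in chain degrees $0,\dots,N$; note $t_{\leq N}$ is lax monoidal (the uniform top truncation is a quotient compatible with $\otimes$) and preserves $\alpha$-purity (it leaves $H_n$ unchanged for $n<N$ and gives $H_N(t_{\leq N}A)=H_N(A)$), so this reduction is harmless.

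The key observation is combinatorial. For $0\le n<n'\le N$ carrying nonzero homology one has $\alpha n,\alpha n'\in\Z$, and if $\alpha n\equiv\alpha n'\modm$ then $m$ divides the positive integer $\alpha(n'-n)$, whence $n'-n\ge m/\alpha$. But $N=\floor{(m-1)/\alpha}<m/\alpha$, a contradiction. Hence on the $N$-truncated objects the map $n\mapsto(\alpha n\modm)$ is injective: \emph{each} weight $p\in\Z/m$ carries homology in at most one chain degree $\delta(p)\in\{0,\dots,N\}$ (defined precisely when some $n\le N$ satisfies $\alpha n\equiv p$). Thus in the relevant range the cyclic-graded object is as pure as a $\Z$-graded one, each weight piece $A^p$ having homology concentrated in the single degree $\delta(p)$ (or being acyclic).

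I would then build the zig-zag of the pure case using the \emph{good (bottom) truncation}. Since the differential preserves the weight, $A=\bigoplus_p A^p$ as complexes; set $\sigma(A)^p:=\sigma_{\ge\delta(p)}(A^p)$, the subcomplex supported in degrees $\ge\delta(p)$ with $\ker d$ in degree $\delta(p)$, and $\sigma(A)^p:=0$ where $\delta(p)$ is undefined (there $A^p$ is already acyclic). This yields natural transformations
\[
U(A)\xleftarrow{\ \iota\ }U(\sigma A)\xrightarrow{\ \pi\ }H_*\circ U(A),
\]
with $\iota$ the inclusion and $\pi$ the projection of each $\sigma(A)^p$ onto its bottom homology $H_{\delta(p)}(A^p)[\delta(p)]$. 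Because the homology of $A^p$ sits exactly in the bottom degree of $\sigma(A)^p$, both $\iota$ and $\pi$ are quasi-isomorphisms after forgetting the grading, while $H_*\circ U$ is lax monoidal through the Künneth map and $\pi$ is compatible with it in bottom degree.

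The main obstacle is the lax monoidality of $\sigma$ (hence of $\iota$ and $\pi$), and this is exactly where the value $N=\floor{(m-1)/\alpha}$ is forced. The structure map $\sigma(A)\otimes\sigma(B)\to\sigma(A\otimes B)$ requires that whenever a tensor $a\otimes b$ of weights $p_1,p_2$ and degrees $n_1\ge\delta(p_1)$, $n_2\ge\delta(p_2)$ survives the $N$-truncation, i.e.\ $n_1+n_2\le N$, one has $n_1+n_2\ge\delta(p_1+p_2)$. From $n_1+n_2\le N$ we get $\delta(p_1)+\delta(p_2)\le N$, so $\delta(p_1)+\delta(p_2)$ is itself a degree in $[0,N]$ of weight $\equiv p_1+p_2$; by the uniqueness above $\delta$ is additive where defined, giving $\delta(p_1+p_2)=\delta(p_1)+\delta(p_2)\le n_1+n_2$, while products of total degree $>N$ are killed by the ambient truncation and impose no condition. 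It is essential that I use the \emph{bottom} truncation: with a weight-dependent threshold the top truncation is only oplax, and the analogous factorization through the quotient genuinely fails, since two weights with large $\delta$ can sum modulo $m$ to a weight with small $\delta$. Granting this, $t_{\leq N}\circ U$ is formal, and therefore $U$ is $N$-formal.
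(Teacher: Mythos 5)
Your proof is correct, and while it rests on the same mechanism as the paper's (a weight-dependent good truncation, with the mod-$m$ wrap-around neutralized by the bound $\alpha N\leq m-1<m$), it is organized genuinely differently. The paper works on untruncated complexes in one pass: it defines $\tau$ with $(\tau A)^p_*$ the good truncation of $A^p_*$ at $\ceil{p/\alpha}$ for \emph{every} weight $p$, proves $\tau$ is lax symmetric monoidal via the subadditivity of the ceiling function, and builds the zig-zag $U \Leftarrow U\circ\tau \Rightarrow t_{\leq N}H_*\circ U \Leftarrow H_*\circ U$, absorbing the wrap-around in a three-case check that $\Psi:\tau\Rightarrow t_{\leq N}H_*$ is monoidal (the case $p+p'\geq m$ dying because then $n+n'>(m-1)/\alpha\geq N$). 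You instead first reduce via Remark \ref{rem: N formality} to proving genuine formality of $t_{\leq N}\circ U$, and your key lemma — that $n\mapsto \alpha n \modm$ is injective on $\{0,\dots,N\}$ since $N<m/\alpha$ — shows the truncated situation is literally pure in the sense of Proposition \ref{prop: purity implies formality}, after which the bottom truncation $\sigma$ at the thresholds $\delta(p)$ gives a zig-zag of honest quasi-isomorphisms (your $\delta(p)$ agrees with the paper's $\ceil{p/\alpha}$ on the weights that carry homology in the relevant range, since $\alpha n\leq m-1$ forces $\alpha n=p$ on the nose). What your route buys is conceptual clarity and a reduction to the known $\Z$-graded case; what it costs is having to verify separately that the graded $t_{\leq N}$ is lax monoidal and purity-preserving. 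Two points you leave implicit but which follow from your additivity of $\delta$: when a product $a\otimes b$ lands in the bottom degree $\delta(p_1+p_2)$ of the target, membership in $\sigma$ requires it to be a cycle — this is forced, since equality $n_1+n_2=\delta(p_1)+\delta(p_2)$ gives $n_i=\delta(p_i)$, where $a$ and $b$ are cycles; and the monoidality of $\pi$ against the K\"{u}nneth map in total degrees strictly between $\delta(p)$ and $N$, where both routes vanish by the same uniqueness of $\delta$. Your closing observation that a weight-dependent \emph{top} truncation would only be oplax is exactly the phenomenon the paper records in Remark \ref{fails_cohomological} as the reason the cohomological case needs the free-model argument of Section \ref{SecFormaldgas}.
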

\begin{proof}
We adapt the proof of \cite[Proposition 2.7]{CiHo}, to the $\Z/h$-graded setting. 

Consider the truncation functor $\tau:\chp(gr^{(h)}\Aa)\lra \chp(gr^{(h)}\Aa)$ defined by sending  a $\Z/h$-graded chain complex $A=\bigoplus A_n^p$, to the graded complex given by:
\[(\tau A)_n^p:=\left\{ 
\begin{array}{ll}
A_n^p& n>\ceil{p/\alpha}\\
\Ker(d:A_n^p\to A_{n-1}^p)& n=\ceil{p/\alpha}\\
0& n<\ceil{p/\alpha}
\end{array}\right.\]
for every $n\in\Z_{\geq 0}$ and every $0\leq p<h$. Note that for each $p$, $\tau(A)^p_*$ is the chain complex given by the canonical truncation of $A_*^p$ at $\ceil{p/\alpha}$,
which satisfies 
\[H_n(\tau(A)^p_*)\cong H_n(A^p_*)\text{ for all }n\geq \ceil{p/\alpha}.\]
Using the inequalities of the ceiling function
$\ceil{x}+\ceil{y}-1\leq \ceil{x+y}\leq \ceil{x}+\ceil{y}$ one easily verifies that $\tau$ is lax symmetric monoidal (see the proof of \cite[Proposition 2.7]{CiHo}).

Consider the lax monoidal functor
\[t_{\leq N}H_*:\chp(gr^{(h)}\Aa)\lra \chp(gr^{(h)}\Aa)\] given by the $N$-truncated homology
\[
t_{\leq N}H_n(A)^p:=\left\{ 
\begin{array}{ll}
H_n(A)^p&\text{ if }n\leq N\\
0&\text{ if }n> N\\
\end{array}
\right..
\]

Define a morphism $\Psi(A):\tau A\lra t_{\leq N}H_*(A)$ by letting 
$\Ker(d)\twoheadrightarrow t_{\leq N}H_n(A)^p$ if $n=\ceil{p/\alpha}$ and $A_n^p\to 0$ if $n\neq \ceil{p/\alpha}$.
We next show that this defines a monoidal natural transformation $\Psi:\tau\Rightarrow t_{\leq N}H_*$.
It suffices to check that given $A,B\in \chp(gr^{(h)}\Aa)^\apure$, the diagram
\[\xymatrix{
\ar[d]_{\mu}\tau A\otimes\tau B\ar[rr]^-{\Psi(A)\otimes\Psi(B)}&&t_{\leq N}H_*(A)\otimes t_{\leq N}H_*(B)\ar[d]^{\mu}\\
\tau(A\otimes B)\ar[rr]^-{\Psi(A\otimes B)}&&t_{\leq N}H_*(A\otimes B)
 }
 \]
 commutes. The only non-trivial verification is for elements $a\in (\tau A)_p^n$ and $b\in (\tau B)_{p'}^{n'}$ with
 $n=\ceil{p/\alpha}$ and $n'=\ceil{p'/\alpha}$. 
 Note that we have 
 \[n+n'=\ceil{p/\alpha}+\ceil{p'/\alpha}\geq \ceil{(p+p')/\alpha}.\]
 We have the following three cases:
 
\begin{enumerate}
 \item If $p+p'<h$ and $n+n'=\ceil{(p+p')/\alpha}$ then $\Psi(a\otimes b)=[a\otimes b]\in t_{\leq N}H_{n+n'}(A\otimes B)$ and the diagram commutes.
 \item If $p+p'<h$ and $n+n'>\ceil{(p+p')/\alpha}$ then $\Psi(a\otimes b)=0$. By $\alpha$-purity we have $H_{n+n'}(A\otimes B)^{p+p'}=0$, and the diagram trivially commutes.
 \item If $p+p'\geq h$ then $\Psi(a\otimes b)=0$. Note that we have
 \[n+n'\geq \ceil{(p+p')/\alpha}\geq \frac{p+p'}{\alpha}>\frac{h-1}{\alpha}\geq N.\]
Therefore $t_{\leq N}H_{n+n'}(A\otimes B)=0$ and the diagram trivially commutes.
\end{enumerate}

The inclusion $\tau A\hookrightarrow A$  defines a monoidal natural transformation $\Phi:U\Rightarrow 1$.
Also, there is an obvious monoidal natural transformation $\Upsilon:H_*\Rightarrow t_{\leq N}H_*$ defined by projection.
All together, gives monoidal natural transformations
\[U\stackrel{\Phi}{\Longleftarrow}U\circ \tau \stackrel{\Psi}{\Longrightarrow}U\circ t_{\leq N}H_*=t_{\leq N}H_*\circ U\stackrel{\Upsilon}{\Longleftarrow} H_*\circ U.\]
It only remains to note that, if $A$ has $\alpha$-pure homology, then the morphism $\Phi(A)$ is a quasi-isomorphism and the morphisms
$\Psi(A)$ and $\Upsilon(A)$ are $N$-quasi-isomorphisms.
\end{proof}

\begin{rem}\label{fails_cohomological}
The proof of the above proposition fails in the cohomological case. The main issue is that the functor $\tau$ is not lax symmetric monoidal in the $\Z/h$-graded situation. 
We will provide an alternative statement for cohomological dg-algebras in Section \ref{SecFormaldgas}, using the theory of free models.
\end{rem}

\section{Main results in the homologically graded case}\label{SecFormalChains}
In this section, we use étale chains, together with the formality criterion of Section \ref{SecFormalFunctors},
to prove our main results of formality in their chain version. Let $\alpha$ be a positive rational number with $\alpha<h$, where we recall that $h$ denotes the order of $q$ in $\F_\l^\times$.

Our main result is the following.

\begin{theo}\label{theo functor formality}
Let $\cat{Sch}_K^\apure$ be the category of schemes $X$ over $K$ with the property that $H^n_{et}(X_{\overline{K}},\F_\l)$ is a pure Tate module of weight $\alpha n$, for all $n$. Then the functor \[\inn(\cat{Sch}_K^\apure)\longrightarrow\ich(\F_\l)\] given by  $X\mapsto C_*(X_{an},\F_\l)$ is $N$-formal as a lax symmetric monoidal $\infty$-functor, with $N=\floor{(h-1)/\alpha}$.
\end{theo}

\begin{proof}
We consider the following composition of functors
\[\inn(\cat{Sch}_K^\apure)\xrightarrow{C^*_{et}}\iTComp\op\xrightarrow{\simeq}\ich(\TMod)\op\xrightarrow{G}\ich(gr^{(h)}\Mod_{\F_\l})\op\]
The first functor is the one constructed in \ref{cons: etale cochains}, it does land in $\iTComp$ because of the restriction on the schemes that we consider. The second functor is the equivalence constructed in Theorem \ref{theo : Beilinson for Tate} and the functor $G$ is defined in Lemma \ref{splittingFrobT}. This composition is a symmetric monoidal $\infty$-functor. We can compose this functor with the duality functor $\ich(gr^{(h)}\Mod_{\F_\l})\op\to \ich(gr^{(h)}\Mod_{\F_\l})$ which is also a symmetric monoidal $\infty$-functor when restricted to chain complexes with finite dimensional cohomology (by \cite[Corollary 1.10]{sga4.5} the étale cohomology of a finite type scheme is indeed finite dimensional). We end up with a symmetric monoidal $\infty$-functor
\[\inn(\cat{Sch}_K^\apure)\lra\ichp(gr^{(h)}\Mod_{\F_\l})^\apure.\]
We can further compose this with the forgetful functor
\[\ichp(gr^{(h)}\Mod_{\F_\l})^\apure\lra \ichp({\F_\l})\]
which is $N$-formal by Proposition \ref{formal_functor_abstract_chains}. We end up with an $N$-formal symmetric monoidal $\infty$-functor
\[\inn(\cat{Sch}_K^\apure)\lra\ichp({\F_\l}).\]
This functor is equivalent to the functor $X\mapsto C^*_{et}(X,\F_\l)^{\vee}$ (where $(-)^\vee$ denotes the dual vector space). By the comparison between étale and singular cohomology (Proposition \ref{prop : comparison}), we have an equivalence of symmetric monoidal $\infty$-functors
\[C^*_{et}(X,\F_\l)^{\vee}\simeq C^*(X_{an},\F_\l)^{\vee}.\]
Finally, we have an equivalence of symmetric monoidal $\infty$-functors
\[C_*(X_{an},\F_\l)\simeq C^*(X_{an},\F_\l)^{\vee}\]
since $X_{an}$ has finite dimensional singular cohomology.
\end{proof}

As an immediate corollary of the above theorem and of Proposition \ref{prop: rigidification positive char} we get the following theorem.

\begin{theo}\label{maintorsion}
Let $P$ be an operad in sets and let $X$ be a  $P$-algebra in $\cat{Sch}_K$.  Let $N=\floor{(h-1)/\alpha}$. Assume that for each color $c$ of $P$, the cohomology $H^n_{et}(X(c)_{\overline{K}},\F_\l)$ is a pure Tate module of weight $\alpha n$, for all $n$. Then:
\begin{enumerate}
\item There is an $N$-equivalence $C_*(X_{an},\F_\l)\simeq H_*(X_{an},\F_\l)$ in the $\infty$-category of $P$-algebras in $\ich(\F_\l)$.
\item If $P$ is admissible and $\Sigma$-cofibrant, then $C_*(X_{an},\F_\l)$ is $N$-formal as a dg-$P$-algebra.
\end{enumerate}
\end{theo}

Let us now review some applications of Theorem \ref{maintorsion}.
Consider the cyclic operad $\overline{\mathcal{M}}_{0,\bullet}$.
As mentioned before, this operad lives in the category of smooth and proper schemes over $\Z$. We have:

\begin{theo}\label{dmformal}
The cyclic dg-operad $C_*((\overline{\mathcal{M}}_{0,\bullet})_{an},\F_\l)$ is $2(\l-2)$-formal.
\end{theo}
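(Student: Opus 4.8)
The plan is to obtain this as a direct instance of Theorem \ref{maintorsion}, applied to the colored operad $P$ in sets whose algebras in a symmetric monoidal category are cyclic operads, with the cyclic operad $\overline{\Mm}_{0,\bullet}$ playing the role of the $P$-algebra. Three ingredients must be assembled: a good choice of base field $K$, the homotopical soundness of $P$, and the verification of the purity hypothesis with the correct exponent $\alpha$.

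First I would fix the base field so as to maximise the formality range. The target object $C_*((\overline{\Mm}_{0,\bullet})_{an},\F_\l)$ is purely topological and does not depend on $K$, so it suffices to exhibit a single $p$-adic field for which the hypotheses of Theorem \ref{maintorsion} give the sharpest bound. Since $\overline{\Mm}_{0,\bullet}$ is defined over $\Z$, it is a cyclic operad in $\cat{Sch}_K$ for every $p$-adic $K$ with $p\neq\l$. The bound $N=\floor{(h-1)/\alpha}$ increases with $h$, the order of $q$ in $\F_\l^\times$, whose maximal value is $\l-1$. By Dirichlet's theorem on primes in arithmetic progressions there is a prime $p\neq\l$ that is a primitive root modulo $\l$; taking $K=\Q_p$, so that $q=p$, gives $h=\l-1$.

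Next I would pin down $\alpha$. Each $\overline{\Mm}_{0,n}$ is smooth and proper, and (by Keel's description of its cohomology) $H^*(\overline{\Mm}_{0,n},\Z)$ is free and concentrated in even degrees, with the Frobenius acting by $q^i$ on $H^{2i}_{et}((\overline{\Mm}_{0,n})_{\overline{K}},\F_\l)$ and the odd cohomology vanishing. In the Tate module convention this says that $H^n$ is a pure Tate module of weight $n/2$ for $n$ even (recall the factor-of-two discrepancy of Remark \ref{rem : even weights}), while for $n$ odd one has $\alpha n\notin\Z$ together with $H^n=0$, exactly as the definition of $\alpha$-purity requires. Thus the hypothesis of Theorem \ref{maintorsion} holds colourwise with $\alpha=1/2$.

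The remaining and genuinely non-trivial point is the homotopical soundness of $P$: I would check that the symmetric groups act freely on the operations of the colored operad encoding cyclic operads ($\Sigma$-cofibrancy) and that this operad is admissible, so that $P$-algebras in $\ch(\F_\l)$ inherit a transferred model structure. Granting this, Theorem \ref{maintorsion} applies and gives $N$-formality of the cyclic dg-operad $C_*((\overline{\Mm}_{0,\bullet})_{an},\F_\l)$ with
\[N=\floor{\frac{h-1}{\alpha}}=\floor{\frac{\l-2}{1/2}}=2(\l-2),\]
as asserted. I expect this homotopical soundness, together with the bookkeeping that presents a cyclic operad in schemes as a $P$-algebra so that the hypotheses can be checked on each color, to be the only real obstacle; the purity computation and the optimisation of $h$ are immediate.
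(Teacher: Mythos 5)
Your proposal is correct and takes essentially the same route as the paper: its proof of Theorem \ref{dmformal} consists precisely of noting that $H^n_{et}((\overline{\Mm}_{0,n})_{\overline{K}},\F_\l)$ is a pure Tate module of weight $n/2$ (so $\alpha=1/2$) and invoking Theorem \ref{maintorsion}, with the choice of a prime $p$ generating $\F_\l^\times$ (hence $h=\l-1$, via Dirichlet as you say) and the homotopical soundness of the colored operad encoding cyclic operads both left implicit. Your write-up simply makes explicit the steps the paper compresses into one line.
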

\begin{proof}
It suffices to prove that for all $n$, the degree $n$ étale cohomology of $\overline{\mathcal{M}}_{0,\bullet}$ with coefficients in $\F_\l$ is a Tate module that is pure of weight $\frac{n}{2}$. We call a scheme $\frac{1}{2}$-pure if it has this property.

We first make the following claims:
\begin{enumerate}
\item [(i)] Schemes that are $\frac{1}{2}$-pure are stable under finite products.
\item [(ii)]If $Z\to X$ is a closed embedding of smooth schemes and $Z$ and $X$ are $\frac{1}{2}$-pure, then the blow-up $B_Z(X)$ is $\frac{1}{2}$-pure.
\end{enumerate}

The first property is an immediate consequence of the K\"unneth formula in \'etale cohomology. The second property follows from the blow-up formula which gives an equivariant isomorphism
\[H_{et}^*(B_Z(X),\mathbb{F}_\l)\cong H_{et}^*(X,\mathbb{F}_\l)\oplus\left(\bigoplus_{i=0}^{c-1} H_{et}^*(Z,\mathbb{F}_\l)[2i]\otimes_{\mathbb{F}_\l}\mathbb{F}_\l(i)\right),\]
where $c$ is the dimension of $Z$.

Now, we can prove that the scheme $\overline{\mathcal{M}}_{0,n}$ is $\frac{1}{2}$-pure by induction on $n$. For $n=3$, this moduli space is a point.
For $n=4$, we have $\overline{\mathcal{M}}_{0,4}\cong\mathbb{P}^1$ and the proposition is a classical computation. Assume that the proposition has been proved for $\{3,4,\ldots, n\}$. We may use Keel's inductive description of $\overline{\mathcal{M}}_{0,n+1}$ as a sequence of blow-ups starting from $\overline{\mathcal{M}}_{0,n}\times\overline{\mathcal{M}}_{0,4}$ and in which at each stage, the variety that is blown-up is isomorphic to $\overline{\mathcal{M}}_{0,p+1}\times \overline{\mathcal{M}}_{0,q+1}$ with $p+q=n$ (see \cite[Section 1]{Keel92}). We conclude by the induction hypothesis and the first claim of the proof.
\end{proof}

We also have a statement for the little disks operad $\oper{D}$. This does not quite fit our theorem since this is not an operad in the category of schemes. Nevertheless, one can construct a model of $C_*(\oper{D},\F_\l)$ equipped with an action of the Grothendieck-Teichmüller group $\widehat{GT}$, which promotes $C_*(\oper{D},\F_\l)$ to an operad in the category of Tate complexes. 

\begin{theo}\label{littledisksformal}
The dg-operad $C_*(\oper{D},\F_\l)$ is $(\l-2)$-formal.
\end{theo}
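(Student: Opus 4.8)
The plan is to reduce the statement about the little disks operad $\oper{D}$ to the previous proposition on $\alpha$-pure Tate complexes, exactly as Theorem \ref{maintorsion} does, but working around the fact that $\oper{D}$ is not literally an operad in $\cat{Sch}_K$. The key input, stated just before the theorem, is that one can build a model $C_*(\oper{D},\F_\l)$ carrying a $\widehat{GT}$-action that upgrades it to an operad in the category of Tate complexes $\TComp$. Granting this, the strategy is to identify the weight, verify the purity hypothesis, read off the value of $\alpha$ and $h$, and then apply the formality criterion together with the rigidification of Proposition \ref{prop: rigidification positive char}.

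First I would recall the cohomology of the little disks operad: in arity $r$, the space $\oper{D}(r)$ is homotopy equivalent to the configuration space $F_r(\C)$ of $r$ points in $\C$, whose cohomology is the Arnold/Orlik--Solomon algebra, generated in (singular) degree $1$ by classes $\omega_{ij}$. Via the $\widehat{GT}$-action, the Frobenius element acts on $H^n_{et}$ through the Tate twist $\F_\l(n)$, so that $H^n_{et}(\oper{D}(r)_{\overline K},\F_\l)$ is a pure Tate module of weight $n$. This is the analogue, at the operad level, of the computation used in Theorem \ref{dmformal}, and it pins down the purity exponent as $\alpha=1$: we have $H^n$ concentrated with Frobenius eigenvalue $q^{n}=q^{\alpha n}$ with $\alpha=1$. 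I would make precise that the generators $\omega_{ij}$ lie in weight $1$ and that multiplicativity of the weight (recorded after the definition of Weil numbers, and analogously for Tate modules) forces every degree-$n$ class to have weight $n$.

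With $\alpha=1$ established, the numerology is immediate: the formality defect is governed by $N=\floor{(h-1)/\alpha}=h-1$, where $h$ is the order of $q$ in $\F_\l^\times$. The theorem asserts $(\l-2)$-formality, so the remaining point is to see why $h-1$ can be replaced by $\l-2$ in the relevant range, i.e.\ why the weight-grading argument yields formality up to degree $\l-2$ regardless of the particular residue field. The cleanest way I would handle this is to note that the $\widehat{GT}$-action does not fix a single $p$-adic field, so one is free to choose $q$ so that $h=\l-1$ (a generator of the cyclic group $\F_\l^\times$), giving $N=h-1=\l-2$; this is precisely the mechanism by which the Grothendieck--Teichm\"uller symmetry decouples the bound from any particular choice of $K$. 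Having fixed such a $q$, the operad $C_*(\oper{D},\F_\l)$ becomes an object of $\ialg_P(\iTComp^{1\text{-}pure}_{\geq 0})$, and applying the preceding proposition shows the forgetful functor to $\ichp(\F_\l)$ is $(\l-2)$-formal as a lax monoidal $\infty$-functor, hence $C_*(\oper{D},\F_\l)$ is $(\l-2)$-formal in the $\infty$-categorical sense.

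Finally I would transfer this $\infty$-categorical formality to honest dg-operad formality via Proposition \ref{prop: rigidification positive char}: the operad $P$ encoding non-symmetric or symmetric operad structures (here the colored operad whose algebras are operads) must be checked to be homotopically sound, i.e.\ admissible and $\Sigma$-cofibrant, so that the model structure on $\ialg_P(\ichp(\F_\l))$ computes the correct homotopy theory. The main obstacle, and the step I expect to require the most care, is precisely the construction and naturality of the Frobenius/$\widehat{GT}$-action promoting $C_*(\oper{D},\F_\l)$ to an operad in $\TComp$ with the stated purity — everything downstream is a formal consequence of the machinery already built. I would therefore state clearly that this input is taken as given (as flagged in the paragraph preceding the theorem) and that the choice of $q$ realizing $h=\l-1$ is what makes the bound $\l-2$ optimal, in harmony with Remark \ref{remarkoptimal}.
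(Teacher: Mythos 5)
Your proposal is correct and follows essentially the same route as the paper: take the $\widehat{GT}$-action on a model of $C_*(\oper{D},\F_\l)$ as given, choose $\varphi\in\widehat{GT}$ with $\chi_\l(\varphi)$ reducing to a generator of $\F_\l^\times$ (so $h=\l-1$), deduce purity of weight $1$, and conclude via the $N$-formality proposition for $\alpha$-pure Tate complexes together with the rigidification of Proposition \ref{prop: rigidification positive char}. The one step you leave implicit and should make explicit is how weight $1$ propagates from arity $2$ to all arities: cup-product multiplicativity within a fixed arity does not by itself determine the weight of $H^1(\oper{D}(r),\F_\l)$ (the classes $\omega_{ij}$), and the paper's mechanism is that $H_*(\oper{D},\F_\l)$ is generated as an operad by arity-$2$ operations, so equivariance of the operadic composition maps forces $\varphi$ to act by $\chi_\l(\varphi)^n$ in homological degree $n$ (Petersen's argument, transposed from the rational case).
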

\begin{proof}
There is an action of $\widehat{GT}$ on an operad $B\widehat{\oper{PAB}}$ constructed by Drinfeld in \cite{drinfeldquasitriangular} (see also \cite[Section 7]{horelprofinite} for more details). This is an operad in the category of simplicial profinite sets. We can apply $S^\bullet(-,\F_\l)$ to this object (this construction is defined in Section \ref{SecEtale}) and we get a cosimplicial cooperad with an action of $\widehat{GT}$ that we can then dualize to get an operad in simplicial $\F_\l$-vector spaces equipped with an action of $\widehat{GT}$. Finally we can apply the Dold-Kan construction to this operad to end up with a dg-operad. We claim that the resulting operad is quasi-isomorphic to $C_*(\oper{D},\F_\l)$. The exact same statement for the framed little disks operad is proved in \cite[Theorem 9.1]{boavidaoperads}.
The group $\widehat{GT}$ comes with a surjective map
\[\chi_\l:\widehat{GT}\to \Z_\l^\times\]
that factors the cyclotomic character of the absolute Galois group of $\Q$ (see Section 3.1 of \cite{schnepsgrothendieck}). Moreover, the action of an element $\varphi\in \widehat{GT}$ on $H_k(\oper{D}(2),\F_\l)$ is trivial in degree zero and given by $\chi_\l(\varphi)\id$ in degree $1$. This can be proved by using the explicit action of $\widehat{GT}$ on $B\widehat{PAB}(2)\cong B\widehat{\Z}$ where $\widehat{\Z}$ denotes the profinite completion of the group $\Z$. As an operad, $H_*(\oper{D},\F_\l)$ is generated by operations of arity $2$. Therefore, we can deduce that the action of $\varphi$ on $H_n(\oper{D}(*),\F_\l)$ is given by multiplication by $\chi_\l(\varphi)^n$ (Petersen uses the analogous argument in the rational case in \cite{petersenminimal}). In particular, let $p$ be a prime number such that $p$ generates $\F_\l^{\times}$ and let $\varphi$ be an element of $\widehat{GT}$ such that $\chi_\l(\varphi)=p$ (such $\varphi$ exists by surjectivity of $\chi_\l$). Then from the previous discussion the pair $(C_*(\oper{D},\F_\l),\varphi)$ is an operad in the category of Tate complexes that is pure of weight $1$. 
\end{proof}

Let us denote by $\oper{D}_{\leq n}$ the truncation of the little disks operad in arity less than or equal to $n$ (that is we only keep the composition maps that only involve those arities). Then $C_*(\oper{D}_{\leq n},\F_\l)$ is an $n$-truncated dg-operad.

\begin{coro}\label{trunclittledisksformal}
The $(\l-1)$-truncated operad $C_*(\oper{D}_{\leq (\l-1)},\F_\l)$ is formal.
\end{coro}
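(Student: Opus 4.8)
The plan is to deduce this corollary directly from Theorem \ref{littledisksformal}, which already establishes that $C_*(\oper{D},\F_\l)$ is $(\l-2)$-formal as a dg-operad. The key observation is purely combinatorial: the notion of $N$-formality provides a zig-zag of maps that are $N$-quasi-isomorphisms, and the truncation of an operad in arities $\leq (\l-1)$ only involves the homology in low total degree. The reason the bound $(\l-2)$ on the degree of quasi-isomorphism suffices to give genuine formality after truncating to arity $(\l-1)$ is that, for the little disks operad, the homology $H_*(\oper{D}(n),\F_\l)$ is concentrated in degrees at most $n-1$.

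First I would recall the structure of the homology of the little disks operad: $H_*(\oper{D}(n),\F_\l)$ vanishes in degrees strictly greater than $n-1$, since $\oper{D}(n)$ is homotopy equivalent to the configuration space $F_n(\R^2)$, whose cohomology vanishes above degree $n-1$. Consequently, for the truncated operad $\oper{D}_{\leq(\l-1)}$, every arity $n$ appearing satisfies $n\leq \l-1$, so the homology $H_*(\oper{D}(n),\F_\l)$ is concentrated in degrees at most $n-1\leq \l-2$. Next I would invoke Theorem \ref{littledisksformal} to obtain a zig-zag of maps of dg-operads between $C_*(\oper{D},\F_\l)$ and its homology, each of which is an $(\l-2)$-quasi-isomorphism in every arity. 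Restricting this zig-zag to arities $\leq(\l-1)$ yields a zig-zag of maps of $(\l-1)$-truncated dg-operads.

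The final step is to observe that, in each arity $n\leq\l-1$, these maps induce isomorphisms on homology in all degrees: they are isomorphisms in degrees $\leq\l-2$ by the $(\l-2)$-quasi-isomorphism property, and in degrees $\geq\l-1$ both the source and target have vanishing homology by the degree bound above. Hence the restricted zig-zag consists of genuine quasi-isomorphisms of $(\l-1)$-truncated dg-operads, which is precisely the statement that $C_*(\oper{D}_{\leq(\l-1)},\F_\l)$ is formal.

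I do not expect a serious obstacle here, as the argument is essentially a bookkeeping matching of the formality bound against the top nonvanishing homological degree. The one point requiring care is the compatibility of truncation with the operadic structure maps: one must check that restricting a map of dg-operads to arities $\leq(\l-1)$ again yields a map of $(\l-1)$-truncated dg-operads, and that the $N$-formality zig-zag of Theorem \ref{littledisksformal} is compatible with this restriction. This is immediate since truncation in arity is a functorial operation that preserves the relevant composition maps, so the zig-zag of Theorem \ref{littledisksformal} restricts termwise.
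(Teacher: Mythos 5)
Your strategy is the same as the paper's --- deduce $(\l-2)$-formality of the truncated operad from Theorem \ref{littledisksformal} and exploit that $H_*(\oper{D}(n),\F_\l)$ is concentrated in degrees $\leq n-1\leq \l-2$ for $n\leq \l-1$ --- but your final step has a genuine gap. An $(\l-2)$-formality zig-zag has the shape $C_*(\oper{D},\F_\l)\Leftarrow F_1\Rightarrow\cdots\Leftarrow F_n\Rightarrow H_*(\oper{D},\F_\l)$, and your degree bound controls only the two endpoints. The intermediate dg-operads $F_i$ are arbitrary: an $(\l-2)$-quasi-isomorphism $F_i\to C_*(\oper{D},\F_\l)$ induces an isomorphism on $H_j$ only for $j\leq \l-2$ and says nothing whatsoever about $H_j(F_i)$ for $j\geq \l-1$, which need not vanish. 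So your claim that ``in degrees $\geq\l-1$ both the source and target have vanishing homology'' fails for every map of the zig-zag involving an intermediate object, and the arity-restricted zig-zag need not consist of genuine quasi-isomorphisms.

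The paper closes exactly this gap using the truncation functor of Remark \ref{rem: N formality}: $t_{\leq \l-2}$ is lax symmetric monoidal on $\chp(\F_\l)$, hence carries dg-operads to dg-operads, and by that remark $(\l-2)$-formality of $C_*(\oper{D}_{\leq \l-1},\F_\l)$ is equivalent to genuine formality of $t_{\leq \l-2}C_*(\oper{D}_{\leq \l-1},\F_\l)$ --- in effect, truncating every term of the zig-zag converts each $(\l-2)$-quasi-isomorphism into a quasi-isomorphism regardless of what the intermediate homology was. Your degree bound then enters exactly once, as in the paper: since $H_*(\oper{D}(n),\F_\l)$ vanishes in degrees $\geq \l-1$ for $n\leq \l-1$, the canonical map $C_*(\oper{D}_{\leq \l-1},\F_\l)\to t_{\leq \l-2}C_*(\oper{D}_{\leq \l-1},\F_\l)$ is itself a quasi-isomorphism of truncated dg-operads, and formality of the truncation yields formality of $C_*(\oper{D}_{\leq \l-1},\F_\l)$. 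Your two ingredients are the right ones; the argument just has to be routed through $t_{\leq \l-2}$ rather than through an arity-wise restriction of the zig-zag.
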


\begin{proof}
By the previous theorem, we deduce that $C_*(\oper{D}_{\leq (\l-1)},\F_\l)$ is $(\l-2)$ formal. On the other hand the homology of $\oper{D}(n)$ is concentrated in degrees less than or equal to $n-1$. Therefore, the $\l-2$-truncation map 
\[C_*(\oper{D}_{\leq \l-1},\F_\l)\to t_{\leq \l-2}C_*(\oper{D}_{\leq \l-1},\F_\l)\]
is a quasi-isomorphism.
\end{proof}

\begin{rem}\label{remarkoptimal}
This corollary is in some sense optimal. Indeed $C_*(\oper{D}_{\leq \l},\F_\l)$ is not formal. If it were the case, it would imply that there is a $\Sigma_\l$-equivariant quasi-isomorphism.
\[C_*(\oper{D}(\l),\F_\l)\simeq H_*(\oper{D}(\l),\F_\l)\]
This would mean that the homotopy orbit spectral sequence
\[H_*(\Sigma_\l,H_*(\oper{D}(\l),\F_\l))\implies H_*(\oper{D}(\l)_{h\Sigma_\l},\F_\l)\]
collapses at the $E^2$-page. However, this is not the case. Indeed the $E^2$-page has non-trivial homology in arbitrarily high degree. On the other hand, the space $\oper{D}(\l)_{h\Sigma_\l}$ has the homotopy type of the space of unordered configurations of $\l$ points in the plane which is a manifold and in particular has bounded above homology.
\end{rem}

\begin{rem}
Formality of the operad of little disks with torsion coefficients is generalized to higher dimensional little disks operads in
the work \cite{BH} of the second author with Pedro Boavida de Brito. The strategy of proof
is exactly the same: we construct a Frobenius automorphism on the singular chains over the
little $n$-disks operad in such a way that the resulting operad in Tate complexes is pure.
\end{rem}

We have a similar theorem for the framed little disks operad $\oper{FD}$.

\begin{theo}\label{framedlittledisksformal}
The dg-operad $C_*(\oper{FD},\F_\l)$ is $(\l-2)$-formal. 
\end{theo}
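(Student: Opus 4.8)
The plan is to mimic the proof of Theorem \ref{littledisksformal} for the little disks operad, reducing the framed case to the unframed one by building a suitable Frobenius action and checking that it makes $C_*(\oper{FD},\F_\l)$ an operad in Tate complexes that is pure of weight $1$. The framed little disks operad $\oper{FD}$ is the semidirect product $\oper{D}\rtimes SO(2)$, so its homology is $H_*(\oper{FD},\F_\l)\cong H_*(\oper{D},\F_\l)\otimes H_*(SO(2),\F_\l)$ in each arity, built from the weight $1$ generators of arity $2$ in $\oper{D}$ together with the generator of $H_1(SO(2),\F_\l)\cong H_1(S^1,\F_\l)$. First I would recall that $\widehat{GT}$ acts not only on the profinite model of $\oper{D}$ but also on a profinite model of $\oper{FD}$, the framed version of the parenthesized braids operad, compatibly with the map to the unframed operad. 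This is again constructed via Drinfeld's operad $B\widehat{\oper{PAB}}$ together with the circle factor, and the action of an element $\varphi\in\widehat{GT}$ on the $SO(2)$-factor is again governed by the cyclotomic character $\chi_\l$.

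Next I would apply the cochain functor $S^\bullet(-,\F_\l)$ to this profinite framed operad, dualize, and apply Dold--Kan exactly as in the proof of Theorem \ref{littledisksformal}, to obtain a dg-operad equipped with a $\widehat{GT}$-action that is quasi-isomorphic to $C_*(\oper{FD},\F_\l)$. The key computation is that, choosing a prime $p$ that generates $\F_\l^\times$ and an element $\varphi$ with $\chi_\l(\varphi)=p$, the induced Frobenius $\varphi$ acts on $H_1(SO(2),\F_\l)$ by multiplication by $p=\chi_\l(\varphi)$, i.e.\ with weight $1$, exactly matching the weight of the arity $2$ generators of $\oper{D}$. Since $H_*(\oper{FD},\F_\l)$ is generated as an operad by these arity $2$ operations together with the degree $1$ circle class, all of which are pure of weight $1$, multiplicativity of the weight forces $\varphi$ to act on $H_n(\oper{FD}(k),\F_\l)$ by $\chi_\l(\varphi)^n=p^n$. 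Thus $(C_*(\oper{FD},\F_\l),\varphi)$ is an operad in $\icat{TComp}$ that is $\alpha$-pure with $\alpha=1$.

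With this pure structure in hand, the conclusion follows from the torsion formality machinery: the operad $\oper{FD}$ is homotopically sound (its arity spaces have free symmetric group actions and the associated algebra category admits the transferred model structure), so Theorem \ref{maintorsion} applies with $\alpha=1$ and $h$ the order of $q$ in $\F_\l^\times$. Taking $q=p$ a generator gives $h=\l-1$, whence $N=\floor{(h-1)/\alpha}=\l-2$, yielding $(\l-2)$-formality of $C_*(\oper{FD},\F_\l)$.

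I expect the main obstacle to be the first step: establishing the $\widehat{GT}$-equivariant profinite model of the framed little disks operad and identifying the action on the circle factor with the cyclotomic character. This rests on the framed analogue of the results of \cite{drinfeldquasitriangular} and \cite{horelprofinite} used in the unframed case, and on knowing that the semidirect product structure is respected by the comparison quasi-isomorphism. Once the model and the weight $1$ action on the $SO(2)$-generator are in place, the remaining homological weight bookkeeping and the appeal to Theorem \ref{maintorsion} are routine.
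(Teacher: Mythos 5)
Your proposal is correct and follows essentially the same route as the paper, whose proof of Theorem \ref{framedlittledisksformal} simply declares the argument analogous to that of Theorem \ref{littledisksformal} and cites \cite[Theorems 8.4 and 9.1]{boavidaoperads} for precisely the step you flag as the main obstacle: the $\widehat{GT}$-equivariant model of the framed operad and the identification of the homological action (including on the circle/BV class) with powers of the cyclotomic character, yielding purity of weight $1$ and hence $(\l-2)$-formality with $h=\l-1$. One small correction: in the final appeal to the machinery, ``homotopically sound'' is a condition on the colored operad in sets encoding the algebraic structure (here, the operad whose algebras are dg-operads), not on $\oper{FD}$ itself, though this does not affect the argument.
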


\begin{proof}
The proof is entirely analogous once we have an action of $\widehat{GT}$ on the framed little disks operad. This is constructed in \cite[Theorem 8.4]{boavidaoperads} and the effect of this action on homology is explained in \cite[Theorem 9.1]{boavidaoperads}.
\end{proof}

\section{Weight decompositions and formality of cohomological dg-algebras}\label{SecFormaldgas}
We next give a criterion of formality for dg-algebras over an arbitrary field $\kk$ equipped with a $\Z/h\Z$-graded weight decomposition. In this section and the following we use cohomological instead of homological grading. 

Fix a positive integer $h$ and a positive rational number $\alpha$ with $\alpha<h$.

\begin{defi}
Let $A$ be a non-negatively graded dg-algebra over $\kk$.
A \textit{$gr^{(h)}$-weight decomposition} of $A$
is a  direct sum decomposition
\[A^n=\bigoplus_{p=0}^{h-1} A_p^n\]
of each vector space $A^n$, such that:
\begin{enumerate}
 \item $dA^n_p\subseteq A^{n+1}_p$ for all $n\geq 0$ and all $0\leq p\leq h-1$.
 \item $A_p^n\cdot A_{p'}^{n'}\lra A_{p+p' \modh}^{n+n'}$ for all $n,n'\geq 0$ and all $0\leq p,p'\leq h-1$.
\end{enumerate}
Given $x\in A^n_p$ we will denote by $|x|=n$ its \textit{degree} and by $w(x)=p$ its \textit{weight}.
\end{defi}

Denote by $\dga{h}{\kk}$ the category of dg-algebras with $gr^{(h)}$-weight decompositions.
Note that these are just monoids in $\cochp(gr^{(h)}\kk)$.

\begin{defi}
The cohomology of a dg-algebra $A\in \dga{h}{\kk}$ is said to be \textit{$\alpha$-pure}
if
\[H^n(A)_p=0\text{ for all }p\not\equiv \alpha n \modh.\]
\end{defi}
Denote by $\dga{h}{\kk}^{\apure}$ the category of dg-algebras with $\alpha$-pure cohomology.

\subsection{Massey products}
We first prove some general vanishing results of Massey products in cohomology.
Let $A$ be a dg-algebra and $x_1,\cdots,x_k\in H^*(A)$ cohomology classes, with $k\geq 3$. A \textit{defining system} for $\{x_1,x_2,\cdots,x_k\}$ is a collection of elements $\{x_{i,j}\}$, for $1\leq i\leq j\leq k$ with $(i,j)\neq (1,k)$
where $x_i=[x_{i,i}]$ and 
\[d(x_{i,j})=\sum_{q=i}^{j-1} (-1)^{|x_{i,q}|} x_{i,q} x_{q+1,j}.\]
Consider the cocycle 
\[\gamma(x_{i,j}):=\sum_{q=1}^{k-1}(-1)^{|x_{1,q}|} x_{1,q} x_{q+1,k}.\]
The \textit{$k$-tuple Massey product} $\langle x_1,\cdots,x_k\rangle$ is defined to be the set of all cohomology classes
$[\gamma(x_{i,j})]$, for all possible defining systems. 
A Massey product is said to be \textit{trivial} if the trivial cohomology class belongs to its defining set.

\begin{rem}
Note that the triple Massey product $\langle x_1,x_2,x_3\rangle$ is empty unless
$x_1x_2=0$ and $x_2x_3=0$. For $k>3$ one similarly asks that some $q$-tuple Massey products, with $q<k$,
are trivial in a certain compatible way, so that at least one defining system exists.
\end{rem}

\begin{rem}Given any dg-algebra defined over a field, there is a transferred
structure of $A_\infty$-algebra on its cohomology $H^*(A)$, 
which is unique up to $A_\infty$-isomorphism. The higher operations $\mu_k$ of this $A_\infty$-structure give elements in the corresponding Massey sets. Conversely, 
given $x\in \langle x_1,\cdots,x_k\rangle$ there is always an $A_\infty$-structure on $H^*(A)$ such that $\mu_k(x_1,\cdots,x_k)=\pm x$ \cite{BMM}.
\end{rem}

By obvious degree-weight reasons, the condition of $\alpha$-purity has direct
consequences on the vanishing of Massey products. We have:

\begin{prop}\label{prop: vanishing of Massey products}
Let $A\in \dga{h}{\kk}^{\apure}$ and $k\geq 3$ an integer. If $\frac{\alpha (k-2)}{h}\notin\mathbb{Z}$, then all $k$-tuple Massey products in $H^*(A)$ are trivial.
\end{prop}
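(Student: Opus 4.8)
The plan is to run a degree--versus--weight count on a representative of the Massey product. Recall that if the $k$-tuple Massey product $\langle [a_1],\dots,[a_k]\rangle$ of classes $[a_i]\in H^{n_i}(A)$ is defined, it is a coset in $H^D(A)$ with $D=\sum_{i=1}^{k}n_i-(k-2)$. By $\alpha$-purity, each nonzero input satisfies $[a_i]\in H^{n_i}(A)_{p_i}$ with $p_i\equiv\alpha n_i\ \mathrm{(mod}\ m)$ (if some $[a_i]=0$ the statement is vacuous), while the target $H^D(A)$ is concentrated in weight $\alpha D\equiv \alpha\sum_i n_i-\alpha(k-2)$. The whole point is that a suitably chosen representative will be homogeneous of weight $\sum_i p_i\equiv\alpha\sum_i n_i$, and this weight differs from $\alpha D$ precisely by $\alpha(k-2)$; so the hypothesis $\alpha(k-2)/m\notin\Z$, i.e. $\alpha(k-2)\not\equiv 0\ \mathrm{(mod}\ m)$, forces the class to land in a vanishing graded piece.

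To make this precise I would first record that the $gr^{(m)}$-weight decomposition splits $A$ as a \emph{complex} into $\bigoplus_{p\in\Z/m}A^\bullet_p$, since $d$ preserves weight (property (1)); hence cocycles and coboundaries split by weight, and the product is weight-additive mod $m$ (property (2)). Using the weight splitting of $H^{n_i}(A)$ I choose cocycle representatives $a_{i,i}$ homogeneous of weight $p_i$. Then I build a homogeneous defining system $\{a_{i,j}\}$ by induction on $j-i$, arranging that each $a_{i,j}$ is homogeneous of weight $w_{i,j}:=\sum_{s=i}^{j}p_s$. The right-hand side of the defining equation $d a_{i,j}=\sum_{i\le l<j}\bar a_{i,l}\,a_{l+1,j}$ is then homogeneous of weight $w_{i,j}$ by weight-additivity, so whenever it is a coboundary, splitting off its weight-$w_{i,j}$ part lets me choose a primitive $a_{i,j}$ of that same weight. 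The resulting representative $c=\sum_{l}\bar a_{1,l}\,a_{l+1,k}$ is a cocycle of degree $D$ and weight $w_{1,k}=\sum_i p_i$. Since $H^D(A)$ is concentrated in weight $\alpha D$ and $\sum_i p_i\not\equiv\alpha D\ \mathrm{(mod}\ m)$, we get $[c]=0$, so $0$ lies in the Massey product; in fact every representative lies in $H^D(A)$, whose only possibly nonzero weight component is already accounted for, giving the vanishing.

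The hard part is the inductive construction of a \emph{compatible} homogeneous defining system from the mere hypothesis that the product is defined. For an interval $(i,j)$ of length $L=j-i+1$ the obstruction $R_{i,j}=\sum_{l}\bar a_{i,l}a_{l+1,j}$ is a cocycle of weight $\sum_{s=i}^{j}p_s$ sitting in degree $\sum_{s=i}^{j}n_s-(L-2)$, where $\alpha$-purity forces weight $\alpha\sum_{s=i}^{j}n_s-\alpha(L-2)$. Thus at intervals with $\alpha(L-2)\not\equiv 0\ \mathrm{(mod}\ m)$ the class $[R_{i,j}]$ vanishes automatically by purity, and the step extends homogeneously. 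The remaining intervals, with $\alpha(L-2)\equiv 0$ (always including $L=2$, where $R_{i,j}=\bar a_i a_{i+1}$ depends only on the input classes and is a coboundary exactly because the product is defined), are where I must invoke definedness to guarantee $R_{i,j}$ is a coboundary and then split off a homogeneous primitive, checking that the choices made for overlapping sub-intervals remain coherent. This bookkeeping is the only delicate point; note that the final interval $(1,k)$ is itself of the ``automatically vanishing'' type, which is exactly the mechanism producing the conclusion, and the degree--weight count of the first paragraph is otherwise immediate.
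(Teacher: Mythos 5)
Your first two paragraphs are exactly the paper's proof: the paper takes a defining system $\{x_{i,j}\}$, records $|x_{i,j}|=\sum_{s=i}^{j}n_s+i-j$ and $w(x_{i,j})=\sum_{s=i}^{j}w_s$, observes that the representative $x=\sum_q x_{1,q}\cdot x_{q+1,k}$ has degree $n-k+2$ and weight $w\equiv\alpha n\pmod m$, and invokes purity of $H^{n-k+2}(A)$ to force $\alpha(k-2)\equiv 0\pmod m$ for a nonzero value. Note, however, that the paper simply computes \emph{as if} the defining system were weight-homogeneous; it never addresses the existence question you raise. So the only content of your proposal beyond the paper is the inductive construction in your last paragraph, and that is precisely where there is a genuine gap rather than ``bookkeeping''.

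Concretely, at an intermediate interval of length $L$ with $3\leq L\leq k-1$ and $\alpha(L-2)\equiv 0\pmod m$, purity says nothing about $[R_{i,j}]$, and ``invoking definedness'' does not close the step: definedness furnishes \emph{one} (possibly inhomogeneous) defining system whose own sub-obstructions are coboundaries, but the obstruction of \emph{your} homogeneous partial system is in general a different element of the sub-Massey product $\langle x_i,\dots,x_j\rangle$, which for $L\geq 3$ has indeterminacy, and nothing forces your element to vanish. (The alternative of projecting a given defining system onto the expected weights also fails: writing $a_{i,l}=a'_{i,l}+c_{i,l}$, the cross terms $\bar c_{i,l}\,c_{l+1,j}$ can land back in weight $w_{i,j}$.) Such resonant intermediate lengths do occur under the hypothesis: for $m=2$, $\alpha=1$, $k=5$ one has $\alpha(k-2)/m=3/2\notin\Z$ but $L=4$ is resonant. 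Your induction is therefore complete only when no $L\in\{3,\dots,k-1\}$ satisfies $\alpha(L-2)\equiv 0\pmod m$ — in particular for $k=3$ and $k=4$ (since $0<\alpha<m$ rules out $L=3$ being resonant), where your argument is fully rigorous and indeed more careful than the paper's; the $L=2$ stages are always fine, as you say, because the obstructions $\bar a_i a_{i+1}$ depend only on the classes. In the remaining cases your proof, like the paper's, really establishes the vanishing for weight-homogeneous defining systems. A clean way to repair the general case is homotopy transfer: choose a weight-homogeneous contraction of $A$ onto $H^*(A)$ (possible since $d$ preserves the weight splitting), so the transferred $A_\infty$-operations $m_k$ preserve weight and have degree $2-k$; purity then forces $m_k(x_1,\dots,x_k)=0$ when $\alpha(k-2)\not\equiv 0\pmod m$, and $m_k(x_1,\dots,x_k)$ lies (up to sign) in $\langle x_1,\dots,x_k\rangle$ whenever the Massey product is defined.
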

\begin{proof}
Assume that $x_i\in H^{n_i}(A)_{w_i}$, for $i=1,\cdots,k$, so that $w(x_i)=w_i$ and $|x_i|=n_i$. We will show that $\langle x_1,\cdots,x_k\rangle$ is trivial. Given a defining system $\{x_{i,j}\}$ we have  
\[|x_{ij}|=\sum_{q=i}^j n_i +i-j\text{ and }w(x_{ij})=\sum_{q=i}^j w_i.\]
Let $n:=\sum_{i=1}^k n_i$ and $w:=\sum_{i=1}^k w_i$.
The above expression for $\gamma(x_{i,j})$ gives
\[|\gamma(x_{i,j})|=n-k+2\text{ and } w(\gamma(x_{i,j}))=w.\]
Therefore the $k$-tuple Massey product $\langle x_1,\cdots,x_k\rangle$ lives in
$H^{n-k+2}(A)_{w}$.
Now, $\alpha$-purity tells us that $w_i\equiv \alpha n_i \modh$ and hence
$H^{n-k+2}(A)_{w}$ is non-trivial only when $w\equiv \alpha (n-k+2) \modh$.
This gives the condition $\alpha (k-2) \equiv 0 \modh$.
\end{proof}

For simply connected dg-algebras,
the above proposition tells us that all higher Massey products 
living in sufficiently low-degree cohomology will be trivial, regardless of their length.

\begin{defi}
A dg-algebra $A$ is said to be \textit{cohomologically connected} 
if the unit map induces an isomorphism $H^0(A)\cong \kk$. It is
\textit{simply connected} if, in addition, $H^1(A)=0$. Let $r\geq 1$. Then $A$ is called \textit{$r$-connected} if, in addition, $H^i(A)=0$ for all $1\leq i\leq r$.
\end{defi}

\begin{prop}\label{NMassey}
Let $A\in \dga{h}{\kk}^{\apure}$ be simply connected. Then
$H^{\leq N}(A)$ has no non-trivial Massey products, where $N=\ceil{\frac{h}{\alpha}}+3$.
More generally, if $A$ is $r$-connected with $r>0$ then 
$H^{\leq N_r}(A)$ has no non-trivial Massey products, where
$N_r=\ceil{\frac{hr}{\alpha}}+2r+1$.
\end{prop}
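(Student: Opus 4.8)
The plan is to reduce the whole statement to a lower bound on the cohomological degree in which a non-trivial Massey product can possibly sit, and to extract that bound from the weight--degree bookkeeping already present in Proposition \ref{prop: vanishing of Massey products} together with the connectivity hypothesis. Concretely, I want to show that any non-trivial $k$-tuple Massey product ($k\geq 3$) in $A$ must land in cohomological degree at least $\ceil{mr/\alpha}+2r+2$; the general claim is then immediate, and the simply connected case is just the instance $r=1$, where the bound reads $\ceil{m/\alpha}+3$.

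First I would fix a non-trivial $k$-tuple Massey product $\langle x_1,\dots,x_k\rangle$ with $k\geq 3$, and note that we may assume every entry $x_i$ has positive degree: an entry lying in $H^0(A)\cong\kk$ is a scalar and yields only a degenerate product. By Proposition \ref{prop: vanishing of Massey products}, non-triviality forces $\alpha(k-2)/m\in\Z$; as $k\geq 3$ and $\alpha>0$, the number $\alpha(k-2)$ is then a \emph{positive} integer multiple of $m$, hence $\alpha(k-2)\geq m$, which rearranges to
\[
k\ \geq\ \frac{m}{\alpha}+2 .
\]
Next I would invoke $r$-connectivity: since $H^i(A)=0$ for $1\leq i\leq r$, each non-zero positive-degree class sits in degree at least $r+1$, so $n_i:=|x_i|\geq r+1$ for every $i$. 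Recalling that the product lives in degree $D=\bigl(\sum_{i=1}^{k}n_i\bigr)-(k-2)$, this gives
\[
D\ \geq\ k(r+1)-(k-2)\ =\ kr+2\ \geq\ \Bigl(\tfrac{m}{\alpha}+2\Bigr)r+2\ =\ \frac{mr}{\alpha}+2r+2 .
\]
Since $D$ is an integer and $2r+2\in\Z$, I may round up to obtain $D\geq \ceil{mr/\alpha}+2r+2$, so that no non-trivial Massey product can occur in $H^n(A)$ for $n\leq \ceil{mr/\alpha}+2r+1$, as claimed.

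I expect no serious obstacle here: the genuine input is Proposition \ref{prop: vanishing of Massey products}, and the rest is bookkeeping. The one point requiring care is the order of rounding. To land exactly on the stated bound one should use the real inequality $k\geq m/\alpha+2$ and round only the final degree $D$ (legitimate because $2r+2$ is an integer); rounding $k$ to an integer first would only strengthen the conclusion and overshoot the stated bound. A second, minor point is to justify the reduction to positive-degree entries and to observe that the $\alpha$-purity constraint $\alpha n_i\in\Z$ can only push the $n_i$, and hence $D$, upward, so it never weakens the lower bound.
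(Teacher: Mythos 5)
Your proof is correct and is essentially the paper's own argument, read in contrapositive form: the paper assumes a Massey product in degree $n\leq\ceil{mr/\alpha}+2r+1$, combines the bound $rk+2\leq n$ (from $r$-connectivity) with the ceiling inequality to get $\alpha(k-2)<m$, and then invokes Proposition \ref{prop: vanishing of Massey products}, which is exactly your two ingredients in the other order. Your handling of the rounding (bounding $k\geq m/\alpha+2$ as a real inequality and taking the ceiling only on the final degree) matches the paper's computation, so there is nothing further to add.
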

\begin{proof}
If $H^i(A)=0$ for all $0<i\leq r$, then a $k$-tuple Massey product will have degree at least $rk+2$.
The condition $rk+2\leq \ceil{\frac{hr}{\alpha}}+2r+1$ gives $\alpha(k-2)<h$.
\end{proof}

\subsection{Formality of dg-algebras}
We now prove partial formality results for dg-algebras with pure weight decompositions in cohomology.

\begin{defi}Let $N\geq 0$ be an integer.
A dg-algebra $A$ is said to be \textit{$N$-formal} if there is a string of $N$-quasi-isomorphisms
of dg-algebras from $A$ to its cohomology $H^*(A)$, considered as a dg-algebra with trivial differential. 
\end{defi}

As mentioned in Remark \ref{fails_cohomological}, a main issue in the setting of $\Z/h$-graded dg-algebras is that the functor $\tau$ used in the proof of 
Proposition \ref{formal_functor_abstract_chains} is not lax monoidal. 
To circumvent this, given a 
$\Z/h$-graded dg-algebra with $\alpha$-pure cohomology we will take a free model which is actually $\Z$-graded.
With this new weight-grading we will get $\alpha$-purity only up to degree $N\leq (h-1)/\alpha$, which leads to formality up to this degree.

\begin{prop}\label{Nformaldg-algebras}
Every simply connected dg-algebra in $\dga{h}{\kk}^\apure$ is $N$-formal, with $N=\lfloor\frac{(h-1)}{\alpha}\rfloor$.
\end{prop}

\begin{proof}
Our first task is to construct a convenient model for $A$. We claim that there exists an $N$-quasi-isomorphism $f:M\to A$ in the category $\dga{h}{\kk}^\apure$ such that:
\begin{enumerate}
 \item $M$ is generated by elements of degree $\leq N$.
 \item The differential on $M^n_{\alpha n}$ is trivial for all $\alpha n\leq h-1$. 
 \item Let $\alpha n<p\leq h-1$. If $M_p^n\neq 0$ then $p\leq \alpha(2n-2)$.  
\end{enumerate}

We use the standard theory of free models for dg-algebras over an arbitrary field (see \cite{HaLe}) in order to construct such an $M$. We will define, inductively over the degree $n\geq 0$,
a sequence of dg-algebras $M\langle n\rangle$ with $gr^{(h)}$-weight decompositions 
$M\langle n\rangle^k=\bigoplus M\langle n\rangle^k_p$
and morphisms $f_n:M\langle n\rangle\to A$ compatible with weight decompositions and satisfying the following conditions:
\begin{enumerate}
 \item [$(a_n)$] The dg-algebra $M\langle n\rangle$ is a free extension of $M\langle n-1\rangle$ by generators of degree $n$ 
 and weights $w=p \modh$, with  $\alpha n\leq p\leq \alpha(2n-2)$.
 \item [$(b_n)$] The map $H^i(f_n)$ is an isomorphism for all $i\leq n$ and a monomorphism for $i=n+1$.
\end{enumerate}
Then the morphism \[f:\bigcup_n f_n:\bigcup_{n\leq N} M\langle n\rangle \to A\] will be our model for $A$.

Let $M\langle 1\rangle=\kk$ concentrated in weight 0 and degree 0 and define $f_1:M\langle 1\rangle\to A$ to be the unit map.
Conditions $(a_1)$ and $(b_1)$ are trivially satisfied.
Assume inductively that we have defined $f_{n-1}:M\langle n-1\rangle\to A$ satisfying $(a_{n-1})$ and $(b_{n-1})$.
for each $0\leq p<h$, let $V_p:=H^n(C(f_{n-1}))_p$ and consider it as a bigraded vector space of degree $n$ and pure weight $p$.
Define a differential $d:V_p\to M\langle n-1\rangle^{n+1}_p$ and a map $f_n:V_p\to A^n_p$
by taking a section of the projection
\[H^n(C(f_{n-1}))_p\twoheadleftarrow Z^n(C(f_{n-1}))_p\subset M\langle n-1\rangle^{n+1}_p\oplus A^n_p.\]
These define a differential 
on \[M\langle n\rangle :=M\langle n-1\rangle \sqcup T(V)\]
and a map $f_n:M\langle n\rangle\to A$ compatible with the weight decompositions.
By classical arguments, since $A$ is simply connected and $H^n(A)=\bigoplus H^n(A)_p$, condition $(b_n)$ is satisfied.
Let us prove $(a_n)$. Note that elements in $V_p$ arise either from the cokernel of $H^n(f)_p$, which by $\alpha$-purity, 
is non-trivial only for $p=\alpha n$,
or from elements in the kernel of $H^{n+1}(f)_p$.
Since $M^1=0$, elements in this kernel are represented by sums of products $x_1\cdots x_k\in M\langle n-1\rangle^{n+1}$ with $k\geq 2$.
Let $n_i:=|x_i|$. By induction hypothesis we have
$w(x_i)\equiv p_i  \modh$ with $\alpha n_i\leq p_i\leq \alpha(2n_i-2)$.
We get
\[\alpha(n+1)=\alpha(\sum_{i=1}^k n_i)\leq \sum_{i=1}^k p_i \leq  
\alpha\sum_{i=1}^k(2n_i-2)\leq 2\alpha (n+1) -2\alpha k\leq \alpha(2n-2).\]
Therefore the generators of $V_p$ have degree $n$ and weights 
$w\equiv p \modh$ with 
$\alpha n\leq p\leq (2n-2)\alpha$ and condition $(a_n)$ is satisfied.
This ends the inductive step.

We now prove that the differential on $M^n_{\alpha n}$ is trivial for all $\alpha n\leq h-1$. In fact, we will show that 
$M_{\alpha n}^{n+1}=0$ for all $\alpha n\leq h-1$.
Assume that $x\in M_{\alpha n}^{n+1}$. By construction, $w(x)=\alpha n=p-\lambda n$ with $\lambda\in\Z_{> 0}$
and $\alpha(n+1)\leq p\leq 2n\alpha$. This gives the condition $\lambda h\leq \alpha n$, which contradicts the condition that 
$\alpha n\leq h-1$. Therefore $x=0$.

\medskip

Now, we are ready to prove the proposition. It suffices to prove that the model $M$ that we have just constructed is $N$-formal. As a bigraded vector space, $M$ may be decomposed into $M=A\oplus D\oplus B$ where $D$ denotes the diagonal $p=\alpha n$ truncated up to degree $\alpha n<h$, and $A$ and $B$ are the direct sum of all vector spaces above and below the diagonal respectively:
by letting
\[I_d:=\{(n,p); p=\alpha n\leq h-1\}\text{ and } I_a:= \{(n,p); n< \frac{h-1}{\alpha}, p>\alpha n  \}\]
we may write
\[D=\bigoplus_{(n,p)\in I_d}M^n_p, 
 A=\bigoplus_{(n,p)\in I_a}M^n_p\text{ and }
 B=\bigoplus_{(n,p)\notin I_d\cup I_a}M^n_p. 
\]

By degree-weight reasons, $B$ is a dg-algebra ideal of $M$. By construction of $M$, the
differential of $D$ is trivial and
we have  $H^n(B)=0$ for all $n\leq (h-1)/\alpha$.
Therefore the morphism of dg-algebras $\pi:M\twoheadrightarrow M':=M/B$ induces an isomorphism in cohomology $H^n(\pi)$ for all $n\leq (h-1)/\alpha$.
Consider the projection morphism $M'\to H^*(M')$
given by $A\mapsto 0$ and $D\mapsto D/\mathrm{Im}(d)$.
To see that it is a quasi-isomorphism of dg-algebras, it suffices to see that 
$A\times M'\subseteq A$.
Let $x\in A$ and $y\in M'$. By construction of $M$ we have 
\[
\begin{array}{lll} 
 w(x)\equiv p_x \modh &\text{ with }&\alpha |x|<p_x\leq (2|x|-2)\alpha, \\
 w(y)\equiv p_y \modh & \text{ with }& \alpha |y|\leq p_y\leq (2|y|-2)\alpha.
 \end{array}
\]
Assume that $0\neq x\cdot y\in D$ and let $n:=|x|+|y|$. Then there is $\lambda\in \Z$ such that
$p_x+p_y=\alpha n+\lambda h$. Since $\alpha n<p_x+p_y$ we have $\lambda_\in\Z_{>0}$.
This gives
$\alpha n+ h\leq \alpha n+\lambda h\leq 2\alpha n-4\alpha$ and hence $h\leq \alpha(n-4)$ which is a contradiction, since $\alpha n\leq h-1$.

\end{proof}

\section{Main results in the cohomologically graded case}\label{SecFormalCochains}

Recall that objects in the category of Tate complexes $\TComp$ are chain complexes over $\F_\l$ enriched with automorphisms $\varphi$ giving Tate modules in cohomology.
We will show that every monoid in 
$\TComp$  is quasi-isomorphic to a dg-algebra in $\dga{h}{\F_\l}$, where 
$h$ denotes the order of $q$ in  $\F_\l^\times$. 
As a consequence, the formality criterion of Proposition \ref{Nformaldg-algebras} applies to étale cochains.

\begin{prop}\label{tatetogr}
Let $A$ be a cohomologically connected monoid in $\TComp$.
Then there is a dg-algebra $M$ in $\dga{h}{\F_\l}$ together with a quasi-isomorphism $M\to A$ of dg-algebras,
such that $H^n(M)_k$ corresponds to the generalized eigenspace of $(H^n(A),\varphi)$ 
of eigenvalue $q^k$. 
\end{prop}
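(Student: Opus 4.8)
The plan is to produce $M$ as a free model of $A$ in the sense of Halperin--Lemaire \cite{HaLe}, to equip it with a lift of the automorphism $\varphi$, and to read off the $gr^{(h)}$-weight decomposition from the generalized eigenspaces of this lift. First I would forget the automorphism and build, by the standard inductive procedure over the cohomological degree, a free non-negatively graded dg-algebra $f\colon M\to A$ over $\F_\l$ which is a quasi-isomorphism of dg-algebras; writing $M=\bigcup_n M\langle n\rangle$, where $M\langle n\rangle$ is obtained from $M\langle n-1\rangle$ by freely adjoining a space of generators $V_n$ in degree $n$ identified with $H^n(C(f_{n-1}))$. Since the cohomology of $A$ consists of Tate modules, it is finite-dimensional in each degree, so when $A$ is simply connected this model can be chosen to be of finite type.

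Next I would lift $\varphi$ to a dg-algebra endomorphism $\varphi_M\colon M\to M$ with $f\circ\varphi_M=\varphi\circ f$, constructed inductively on the generators. As $M$ is free, $\varphi_M$ is determined by its values on each $V_n$, and for $v\in V_n$ one must produce an element of $M\langle n\rangle^n$ satisfying $d(\varphi_M v)=\varphi_M(dv)$ and $f(\varphi_M v)=\varphi(fv)$. The pair $(\varphi_M(dv),\varphi(fv))$ is a cocycle of $C(f_{n-1})$ representing the class $\varphi_*[v]$; expressing $\varphi_*[v]$ as a combination of the classes of the generators $V_n$ and correcting by a coboundary coming from $M\langle n-1\rangle$ produces the required value $\varphi_M v\in M\langle n\rangle^n$. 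Thus the lift exists with no obstruction, and I emphasize that $\varphi_M$ need neither be semisimple nor preserve the generating space $V=\bigoplus_n V_n$.

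The crux is then to check that the generalized eigenspaces of $\varphi_M$ assemble into an honest $gr^{(h)}$-weight decomposition. The endomorphism induced by $\varphi_M$ on each $V_n\cong H^n(C(f_{n-1}))$ is a Tate module, hence has eigenvalues that are powers of $q$. Filtering $M\langle n\rangle$ by the number of tensor factors coming from $V_n$ gives a $\varphi_M$-stable filtration whose associated graded acts through tensor powers of these Tate endomorphisms together with $\varphi_{M\langle n-1\rangle}$; by induction on $n$ all eigenvalues of $\varphi_M$ on every finite-dimensional piece $M^j$ are therefore powers of $q$. Consequently $M^j=\bigoplus_{p\in\Z/h}(M^j)_p$, where $(M^j)_p$ is the generalized eigenspace for the eigenvalue $q^p$. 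This is a grading by $\Z/h$ because $q^p$ depends only on $p$ modulo $h$; it is multiplicative since $\varphi_M$ is an algebra map and the generalized eigenvalue of a product is the product of the generalized eigenvalues; and it is preserved by $d$ since $d\varphi_M=\varphi_M d$. Hence $M$ becomes an object of $\dga{h}{\F_\l}$, and degreewise finiteness of $M$ is inherited by each graded piece in the simply connected case.

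Finally, the quasi-isomorphism $f$ commutes with the automorphisms, so $H^*(f)$ identifies the generalized $q^k$-eigenspace of $\varphi_M$ on $H^n(M)$---which, since the weight grading is exactly the generalized eigenspace decomposition and $d$ preserves it, equals $H^n(M)_k$---with the generalized $q^k$-eigenspace of $\varphi$ on $H^n(A)$, as required. The step I expect to be the main obstacle is precisely the eigenvalue bookkeeping of the third paragraph: the lift $\varphi_M$ is genuinely non-semisimple in general, and one must verify that passing to \emph{generalized} eigenspaces (rather than eigenspaces) still yields a decomposition compatible with both the differential and the product; the filtration by the number of $V_n$-factors is what makes this manageable.
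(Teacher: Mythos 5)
There is a genuine gap in your second paragraph: the strictly commuting lift $\varphi_M$ with $f\circ\varphi_M=\varphi\circ f$ does not exist in general, and the assertion that it exists ``with no obstruction'' is exactly where the argument breaks. Run your own correction step with the cone differential $D(m,a)=(dm,fm-da)$: writing $H^n(\psi)[v]=\sum_i c_i[v_i]$, the difference $(\varphi_M(dv),\varphi(fv))-\sum_i c_i(dv_i,fv_i)$ is a coboundary $D(m,a)$ with $m\in M\langle n-1\rangle^{n}$ but $a\in A^{n-1}$. Setting $\varphi_M v:=\sum_i c_i v_i+m$ does give $d(\varphi_M v)=\varphi_M(dv)$, but $f(\varphi_M v)=\varphi(fv)\pm da$, and the error term $da$ lives in $A$, not in the image of $f$. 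Chasing the freedom in the choices (modifying $\varphi_M v$ by a cocycle $z$, and using that $H^n(f_{n-1})$ is injective, so any exact $fz$ is of the form $d(fw)$) shows the error can be removed if and only if $a$ can be taken in $f(M\langle n-1\rangle^{n-1})+Z^{n-1}(A)$, a generally proper subspace of $A^{n-1}$. This is the standard model-categorical phenomenon: $M$ is cofibrant but $f$ is only a quasi-isomorphism, not a fibration, so $\varphi$ lifts through $f$ only up to homotopy of dg-algebra maps. You could force strictness by making $f$ surjective (adjoining acyclic generators covering all of $A$), but that destroys precisely what your third paragraph needs: the extra generators are modeled on the cochains of $A$, where $\varphi$ satisfies no eigenvalue constraint and the graded pieces are infinite dimensional, so there is no generalized eigenspace decomposition (and no finite type model in the simply connected case).

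The paper resolves this tension by never asking for strict equivariance: it runs the same induction on free extensions, but constructs a \emph{ho-morphism} $(f,F)\colon(M,\varphi)\to(A,\varphi)$ in the sense of Section \ref{SecBeilinson}, i.e.\ $f$ is a strict dg-algebra map while the compatibility with the automorphisms holds only up to a specified homotopy $F$, which is carried along through the induction. The homotopy enters the endomorphism of the cone via $\psi(m,a)=(\varphi m,\varphi a-Fm)$, which is a chain map precisely because $(f,F)$ is a ho-morphism; one then chooses a section $\sigma\colon H^n(C(f))\to Z^n(C(f))$ commuting with $\psi$ up to a homotopy $\Sigma$, sets $d_V=\pi_1\sigma$ and $f_V=\pi_2\sigma$, and checks $\varphi d_V=d_V H(\psi)$ strictly while $\varphi f_V-f_VH(\psi)=Fd_V-\pi_2 d\Sigma$ supplies the new homotopy $\tilde F$. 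A pleasant byproduct is that the endomorphism placed on the new generators $V=H^n(C(f))$ is by fiat the Tate-module endomorphism $H(\psi)$, so $V$ is stable under the endomorphism and splits by Lemma \ref{splittingFrobT}; the $gr^{(h)}$-weight decomposition of $M\langle n\rangle$ is then immediate degree by degree, which makes your filtration-by-number-of-$V_n$-factors bookkeeping unnecessary. Note finally that the conclusion of the proposition only needs $H(f)$ to intertwine the induced automorphisms on cohomology, which a ho-morphism guarantees --- this is the flexibility your strict-lift attempt gives up without gaining anything.
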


\begin{proof}
Using Theorem \ref{theo : Beilinson for Tate}, we can simply take $M$ to be the subalgebra $\iota\pi(A)$
\end{proof}

Let $\alpha$ be a positive rational number, with $\alpha<h$.
Our main theorem for cohomological dg-algebras is the following:
\begin{theo}\label{maindg-algebras}
Let $X\in \cat{Sch}_K$ be a scheme over $K$.
Assume that for all $n$, $H^n_{et}(X_{\overline{K}},\F_\l)$ is a pure Tate module of weight $\alpha n$.
Then the following is satisfied:
\begin{enumerate}
 \item [(i)]  If $\alpha(k-2)/h\notin\mathbb{Z}$ then all $k$-tuple Massey products are trivial in $H^*(X_{an},\F_\l)$.
 \item [(ii)]If $H^i(X_{an},\F_\l)=0$ for all $0<i\leq r$ then $H^n(X_{an},\F_\l)$ contains no non-trivial Massey products
 for all $n\leq \ceil{\frac{hr}{\alpha}+2r+1}$.
  \item [(iii)] If it is simply connected, then the dg-algebra
  $C^*_{sing}(X_{an},\F_\l)$ is $N$-formal, with $N=\floor{(h-1)/\alpha}$.
\end{enumerate}
\end{theo}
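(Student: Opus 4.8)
The plan is to transport the whole question to the algebraic setting of dg-algebras equipped with a $gr^{(h)}$-weight decomposition, where the three conclusions are exactly Propositions \ref{prop: vanishing of Massey products}, \ref{NMassey} and \ref{Nformaldg-algebras} specialized to $m=h$. The bridge is supplied by the functor of étale cochains of Section \ref{SecEtale}, which endows the cochains of $X$ with a Frobenius endomorphism, together with the rectification of Proposition \ref{tatetogr}, which converts this homotopy-coherent Frobenius action into a strict weight grading.

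First I would replace the singular cochains by the étale cochains. The comparison maps of Section \ref{SecEtale} assemble into a zig-zag of dg-algebra quasi-isomorphisms
\[
C^*_{sing}(X_{an},\F_\l)\longleftarrow C^*_{et}(X_\C,\F_\l)\lra C^*_{et}(X_{\overline{K}},\F_\l),
\]
so that $C^*_{sing}(X_{an},\F_\l)$ and $A:=C^*_{et}(X_{\overline{K}},\F_\l)$ have isomorphic cohomology algebras, corresponding Massey products, and one is $N$-formal precisely when the other is. It therefore suffices to prove (i)--(iii) for $A$. The chosen Frobenius lift $\varphi$ acts multiplicatively on $A$, and the purity hypothesis says precisely that each $(H^n(A),H^n(\varphi))$ is a pure Tate module of weight $\alpha n$; hence $A$ is a monoid in $(\TComp)\op$. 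After reducing to the geometrically connected case (the general case follows by splitting off connected components, and part (iii) already supplies connectivity), $A$ is cohomologically connected.

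Then I would rectify $A$ using Proposition \ref{tatetogr}: there is a dg-algebra $M\in\dga{h}{\F_\l}$ and a quasi-isomorphism $M\to A$ with $H^n(M)_k$ the generalized eigenspace of $\varphi$ on $H^n(A)$ for the eigenvalue $q^k$, and $M$ may be taken of finite type when $A$ is simply connected. The key point is that purity becomes $\alpha$-purity of the weight grading: by Lemma \ref{splittingFrobT} the eigenvalue $q^{\alpha n}$ lands in weight $\alpha n\modh$, so $H^n(M)_p=0$ whenever $p\not\equiv \alpha n\modh$, whence $M\in\dga{h}{\F_\l}^{\apure}$.

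Finally I would invoke the algebraic criteria with $m=h$. Part (i) is Proposition \ref{prop: vanishing of Massey products}: if $\alpha(k-2)/h\notin\Z$ then all $k$-tuple Massey products vanish. For part (ii) the hypothesis makes $M$ be $r$-connected, and Proposition \ref{NMassey} gives no non-trivial Massey products in $H^n(M)$ for $n\leq\ceil{hr/\alpha}+2r+1=\ceil{hr/\alpha+2r+1}$. For part (iii), $M$ is simply connected, so Proposition \ref{Nformaldg-algebras} yields $N$-formality with $N=\floor{(h-1)/\alpha}$, which then transfers back to $C^*_{sing}(X_{an},\F_\l)$. The conceptual heart of the matter—turning the up-to-homotopy Frobenius into a strict weight grading compatible with the product—is already handled by Proposition \ref{tatetogr}, so within the present argument the only real subtlety will be the bookkeeping: checking that every comparison and rectification map is a genuine dg-algebra map, so that Massey products and $N$-formality are carried across intact, and that the Frobenius weights are read off modulo $h$ consistently with the $\alpha$-purity convention.
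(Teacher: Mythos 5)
Your proposal is correct and follows essentially the same route as the paper: pass from $C^*_{sing}(X_{an},\F_\l)$ to the étale cochains via the comparison zig-zag of Section \ref{SecEtale}, rectify the resulting monoid in Tate complexes to a dg-algebra in $\dga{h}{\F_\l}^{\apure}$ using Proposition \ref{tatetogr}, and then apply Propositions \ref{prop: vanishing of Massey products}, \ref{NMassey} and \ref{Nformaldg-algebras} with $m=h$. Your added bookkeeping (the connectedness hypothesis needed for Proposition \ref{tatetogr}, the identification of the eigenvalue $q^{\alpha n}$ with weight $\alpha n \modh$, and the equality $\ceil{hr/\alpha}+2r+1=\ceil{hr/\alpha+2r+1}$) only makes explicit details the paper's terse proof leaves implicit.
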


\begin{proof}
By assumption, the dg-algebra $C^*_{sing}(X_{an},\F_\l)$ is quasi-isomorphic to a monoid in Tate complexes
$(C^*_{et}(X,\F_\l),\varphi)$ 
with $\alpha$-pure cohomology.
Therefore by Proposition \ref{tatetogr}, it is quasi-isomorphic to a dg-algebra in $\dga{h}{\F_\l}^\apure$.
It now suffices to apply Propositions \ref{prop: vanishing of Massey products}, \ref{NMassey}, \ref{Nformaldg-algebras},
respectively for each of the implications (i), (ii) and (iii).
\end{proof}

We have the following direct application of the previous theorem.

\begin{coro}
Let $X$ be a smooth and proper scheme over $\mathcal{O}_K$, the ring of integers of $K$. Assume that the only eigenvalues of the Frobenius action on $H^*_{et}(X_{\overline{K}},\F_\l)$ are powers of $q$. Then:
\begin{enumerate}
 \item [(i)] If $(k-2)/{2h}\notin\mathbb{Z}$ then all $k$-tuple Massey products are trivial in $H^*(X_{an},\F_\l)$.
 \item [(ii)]If $H^i(X_{an},\F_\l)=0$ for all $0<i\leq r$ then $H^n(X_{an},\F_\l)$ contains no non-trivial Massey products
 for all $n\leq 2hr+2r+1$.
  \item [(iii)] If it is simply connected, the dg-algebra $C^*_{sing}(X_{an},\F_\l)$ is $2(h-1)$-formal.
\end{enumerate}
\end{coro}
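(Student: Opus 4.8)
The plan is to obtain this corollary as the specialization of Theorem \ref{maindg-algebras} to the value $\alpha = 1/2$. Substituting $\alpha = 1/2$ into the three conclusions of that theorem reproduces exactly the numerology stated here: the condition that $\alpha(k-2)/h$ fail to be an integer becomes the condition on $(k-2)/2h$ in (i); the bound $\ceil{hr/\alpha + 2r + 1}$ becomes $\ceil{2hr + 2r + 1} = 2hr + 2r + 1$ in (ii); and $N = \floor{(h-1)/\alpha} = \floor{2(h-1)} = 2(h-1)$ in (iii). Thus the entire content of the corollary reduces to checking that the hypothesis of Theorem \ref{maindg-algebras} holds with $\alpha = 1/2$, i.e.\ that for every $n$ the group $H^n_{et}(X_{\overline{K}},\F_\l)$ is a pure Tate module of weight $n/2$.

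The first step is to reduce to the special fiber. Since $X$ is smooth and proper over $\mathcal{O}_K$, the smooth and proper base change theorem for étale cohomology supplies, for every $n$, an isomorphism $H^n_{et}(X_{\overline{K}},\F_\l)\cong H^n_{et}(X_{\overline{\F}_q},\F_\l)$ under which the chosen Frobenius lift on the left corresponds to the geometric Frobenius of $\overline{\F}_q$ on the right. This is precisely the argument already recorded in Remark \ref{rem: good reduction}, run here with $\F_\l$-coefficients rather than $\Q_\l$-coefficients, and it transports the problem to a smooth proper variety over the finite field $\F_q$.

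The second, and crucial, step is to pin down the weight. By Deligne's Weil conjectures the eigenvalues of the geometric Frobenius on $H^n_{et}(X_{\overline{\F}_q},\Q_\l)$ are $q$-Weil numbers of weight $n$, of complex absolute value $q^{n/2}$. The hypothesis of the corollary is that, after reduction mod $\l$, every Frobenius eigenvalue on $H^n_{et}(X_{\overline{\F}_q},\F_\l)$ is a power of $q$. Combining the power-of-$q$ constraint with the purity constraint of absolute value $q^{n/2}$ forces the single eigenvalue $q^{n/2}$, so that $H^n$ is a pure Tate module of weight $n/2 = \alpha n$ in even degrees, while in odd degrees the same purity obstruction yields $H^n = 0$, in agreement with the convention that $H^n_{et}(X_{\overline{K}},\F_\l)=0$ whenever $\alpha n\notin\mathbb{N}$. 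The factor of $2$ relating this Tate weight $n/2$ to the Weil-theoretic weight $n$ of Remark \ref{rem: good reduction} is exactly the normalization discrepancy recorded in Remark \ref{rem : even weights}.

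With the hypothesis of Theorem \ref{maindg-algebras} verified for $\alpha = 1/2$, the three assertions (i)--(iii) follow at once from the corresponding assertions of that theorem. I expect the weight-pinning step to be the main obstacle: one must transport purity across the reduction mod $\l$, passing from $\Q_\l$- (or $\Z_\l$-) coefficients, where absolute values of Weil numbers are available, to the purely Tate-theoretic statement over $\F_\l$, and in particular control the behavior in odd cohomological degrees, where the interaction between the residual order $h$ and the half-integral weight $n/2$ is the delicate point.
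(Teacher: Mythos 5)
Your overall route is exactly the paper's: the proof in the text is a one-line specialization of Theorem \ref{maindg-algebras} at $\alpha=1/2$, invoking Remark \ref{rem: good reduction} for the reduction to the special fibre, and your numerology check ($(k-2)/2h$, $2hr+2r+1$, $N=\floor{2(h-1)}=2(h-1)$) is correct. The gap is in the step you yourself flagged as crucial. The claim that ``power of $q$ mod $\l$'' together with Deligne's purity of complex absolute value $q^{n/2}$ forces the single eigenvalue $q^{n/2}\in\F_\l$ is false: archimedean absolute values of Weil numbers are invisible after reduction modulo $\l$, and the exponent of a power of $q$ in $\F_\l^\times$ is only defined modulo $h$. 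Concretely, $-q$ is a $q$-Weil number of weight $2$ and occurs as a Frobenius eigenvalue on $H^2_{et}$ of a smooth projective quadric surface whose two rulings are conjugate; when $h$ is even its reduction is $q^{1+h/2}$, a power of $q$ \emph{distinct} from $q$ in $\F_\l$, so your hypothesis is satisfied while $H^2$ fails to be a pure Tate module of weight $1$. Your parallel claim that odd-degree cohomology must vanish also fails: for an elliptic curve $E$ with good reduction and $\l$ dividing $\#E(\F_q)$, the Frobenius eigenvalues on $H^1_{et}(E_{\overline{K}},\F_\l)\neq 0$ are $1$ and $q$ mod $\l$, both powers of $q$. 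Finally, when $H^*_{et}(X_{\overline{K}},\Z_\l)$ has torsion, the universal coefficients sequence produces mod-$\l$ classes whose Frobenius eigenvalues are not governed by Weil purity at all, so the passage from $\Q_\l$-eigenvalues to $\F_\l$-eigenvalues is not even available degreewise.

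By contrast, the paper does not attempt this derivation: its proof simply treats the corollary's hypothesis, combined with the smooth-proper base change of Remark \ref{rem: good reduction}, as supplying the $\tfrac{1}{2}$-purity input of Theorem \ref{maindg-algebras} --- that is, it reads the assumption as saying that the unique eigenvalue on $H^n_{et}(X_{\overline{K}},\F_\l)$ is $q^{n/2}$, with odd cohomology vanishing. Your proposal tries to prove strictly more, namely that the literal ``all eigenvalues are powers of $q$'' statement already implies this purity, and the counterexamples above show that implication does not hold. To repair your argument you must either take the purity form of the hypothesis as given, or strengthen the hypothesis to something your reduction actually transports, e.g.\ that $H^*_{et}(X_{\overline{K}},\Z_\l)$ is torsion-free and the Frobenius eigenvalues on $H^n_{et}(X_{\overline{K}},\Q_\l)$ are literally the algebraic numbers $q^{n/2}$, in which case weight-$n$ purity does pin the exponent, forces odd cohomology to vanish, and reduces correctly mod $\l$.
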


\begin{proof}
The conditions of the theorem are satisfied with $\alpha=1/2$.
\end{proof}

\begin{example}
We can apply this corollary to $\mathbb{P}^n$. This is a smooth and proper scheme over $\Z$. We can therefore base change it to $\Z_p$ with $p$ a prime number that generates $\F_\l^\times$ and we deduce that $C^*_{sing}(\mathbb{P}^n_{an},\F_\l)$ is $2(\l-2)$-formal. Note that in fact, complex projective spaces are formal over the integers (see \cite{Berg}).

As illustrated by this example, it often happens that the scheme of interest has a smooth and proper model $X$ over a commutative ring $R$ that is finitely generated over $\Z$. In that case there are infinitely many ways to base change $X$ to a ring of the form $\mathcal{O}_K$ (one for each maximal ideal of $R$). Those different ways give rise to different values for the parameter $h$ and one should pick the option that yields the largest possible value for $h$.
\end{example}

Denote by $F_m(\mathbb{A}^d)$ the scheme of configurations of $m$ points in $\mathbb{A}^d$.
This is informally described at the point set level as follows
\[F_m(\mathbb{A}^d)=\{(x_1,\ldots,x_m)\in(\mathbb{A}^d)^m| x_i\neq x_j,\;1\leq i<j\leq m\}\]

The configuration spaces $F_m(\R^d)$ are known to be formal over $\Q$ for any $m$ and $d$.
However, the question of formality over $\F_\l$ is open as explained in \cite{Salvatore}.
Using our machinery we deduce some results of formality over $\F_\l$ for configuration spaces $F_m(\mathbb{C}^d)$.
We will in fact consider the more general problem of the formality of a complement of subspace arrangements. 

\begin{defi}\label{defi : good arrangement}
Let $V$ be a $d$-dimensional $K$-vector space. We say that a finite family $\{W_i\}_{i\in I}$ subspaces of $V$ is a \textit{good arrangement of codimension $c$ subspaces} if each $W_i$ is of codimension $c$ in $V$ and for each subset $J\subset I$, the codimension of the intersection $\bigcap_{j\in J} W_j$ is a multiple of $c$.
\end{defi}

\begin{rem}
It is easy to check that this definition implies the following properties.
\begin{enumerate}
\item The empty arrangement is good.
\item If $\{W_i\}_{i\in I}$ is a good arrangement of codimension $c$ subspaces of $V$, then, for each $J\subset I$, the family $\{W_j\}_{j\in J}$ is a good arrangement of codimension $c$ subspaces of $V$.
\item If $\{W_i\}_{i\in I}$ is a good arrangement of codimension $c$ subspaces of $V$, then for each $i\in I$, the family $\{W_j\cap W_i\}_{j\neq i}$ is either a good arrangement of codimension $c$ subspaces of $W_i$ or one of the spaces $W_j\cap W_i$ is equal to $W_i$.
\end{enumerate}
\end{rem}

\begin{rem}\label{rem : good arrangements}
In \cite[Definition 8.2]{CiHo} we gave an inductive definition of good arrangements which was slightly imprecise. We believe that Definition \ref{defi : good arrangement} is more natural and clearer. Under this definition we redo the proof of \cite[Proposition 8.6]{CiHo} in the context of Galois actions in Lemma \ref{purity_codim} below.
\end{rem}

\begin{example}
Any hyperplane arrangement is a good arrangement of codimension $1$-subspaces.
\end{example}

\begin{example}
Recall that a set of subspaces of codimension $c$ in a $d$-dimensional vector space is said to be in general position if the intersection of $n$ of those subspaces is of codimension $\on{min}(d,cn)$. One easily checks that, if $d$ is a multiple of $c$, a set of codimensions $c$ subspaces in general position is a good arrangement. Let us mention that there is small mistake in \cite[Example 8.4]{CiHo} where we forgot to include the condition that the dimension of the ambient space is a multiple of $c$. Without this hypothesis \cite[Proposition 8.6]{CiHo} does not hold as can be seen by considering the complement of $2$ distinct lines in $\mathbb{C}^3$.
\end{example}

\begin{example}
Take $V=(K^d)^m$ and define, for $(i,j)$ an unordered pair of distinct elements in $\{1,\ldots,m\}$, the subspace
\[W_{(i,j)}=\{(x_1,\ldots,x_m)\in (K^d)^m, x_i=x_j\}.\]
This collection of codimension $d$ subspaces of $V$ is a good arrangement. The complement $V-\bigcup_{(i,j)}W_{(i,j)}$ is exactly $F_m(\mathbb{A}^d)$, the ordered configuration space of $m$ distinct points in $\mathbb{A}^d$. Let us observe that these subspaces are not in general position if $m$ is at least $3$. Indeed, the codimension of $W_{(1,2)}\cap W_{(1,3)}\cap W_{(2,3)}$ is $2d$. This example shows that good arrangements are more general than arrangements in general position.
\end{example}

\begin{lemm}[c.f. \cite{BjEk}]
\label{purity_codim}
Let $V$ be a $d$-dimensional vector space over $K$ and $\{W_i\}_{i\in I}$ be a finite collection of subspaces that form a good arrangement of codimension $c$ subspaces. Then $H^n_{et}(V-\bigcup_i W_i,\F_\l)$ is pure of weight $cn/(2c-1)$.
\end{lemm}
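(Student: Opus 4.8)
The plan is to argue by a double induction, on the dimension $d=\dim_K V$ and on the cardinality $|I|$, using the Gysin localization sequence in étale cohomology; the $\Q_\l$ and $\F_\l$ statements are proved in parallel. Write $U:=V-\bigcup_{i\in I}W_i$ and let $w(n):=2cn/(2c-1)$ be the weight predicted over $\Q_\l$, so that $w(n)/2=cn/(2c-1)$ is the one predicted over $\F_\l$. When $I=\emptyset$ we have $U=\mathbb{A}^d$, whose cohomology is $\Q_\l$ (resp.\ $\F_\l$) in degree $0$ and zero otherwise; this is the base of the induction, with the convention that a pure module of non-integral weight is zero. Note also that when $d<c$ there are no codimension $c$ subspaces, so only the empty arrangement occurs.

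For the inductive step I fix $i_0\in I$ and set
\[U':=V-\bigcup_{i\neq i_0}W_i\qquad\text{and}\qquad Z:=W_{i_0}\cap U'=W_{i_0}-\bigcup_{i\neq i_0}(W_{i_0}\cap W_i),\]
so that $U=U'-Z$. Here $U'$ is smooth and $Z$ is a smooth closed subvariety of pure codimension $c$, being open in the linear subspace $W_{i_0}$, which is closed of codimension $c$ in $V$. All three schemes are defined over $K$, so the associated localization sequence is equivariant for $\on{Gal}(\overline{K}/K)$, hence for the chosen Frobenius lift $\varphi$. Cohomological purity for the smooth pair $(U',Z)$ (see \cite{Milne}) then yields a $\varphi$-equivariant long exact sequence
\[\cdots\lra H^{n-2c}_{et}(Z)(-c)\lra H^n_{et}(U')\lra H^n_{et}(U)\lra H^{n-2c+1}_{et}(Z)(-c)\lra\cdots\]
valid with coefficients in $\Q_\l$ and in $\F_\l$, where the Tate twist $(-c)$ raises the Frobenius weight by $2c$ (by $c$ over $\F_\l$). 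To feed the induction I need $U'$ and $Z$ to again be complements of good arrangements of codimension $c$ subspaces. For $Z$ this is precisely condition (ii) of the definition, which exhibits $\{W_{i_0}\cap W_i\}_{i\neq i_0}$ as a good arrangement in the $(d-c)$-dimensional space $W_{i_0}$. For $U'$ it follows from the elementary fact, itself proved by induction on $d$ using condition (ii), that deleting one member of a good arrangement leaves a good arrangement.

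By the inductive hypothesis $H^n_{et}(U')$ is pure of weight $w(n)$, and $H^m_{et}(Z)$ is pure of weight $w(m)$ for all $m$, the latter because $\dim W_{i_0}=d-c<d$. A direct computation then gives that $H^{n-2c+1}_{et}(Z)(-c)$ is pure of weight $w(n-2c+1)+2c=w(n)$. Splitting the long exact sequence into
\[0\lra \Coker\!\big(H^{n-2c}_{et}(Z)(-c)\to H^n_{et}(U')\big)\lra H^n_{et}(U)\lra \Ker\!\big(H^{n-2c+1}_{et}(Z)(-c)\to H^{n+1}_{et}(U')\big)\lra 0,\]
we exhibit $H^n_{et}(U)$ as an extension of a submodule of $H^{n-2c+1}_{et}(Z)(-c)$ by a quotient of $H^n_{et}(U')$, both pure of weight $w(n)$. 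Since $\WMod$ (resp.\ $\TMod$) is abelian (Lemma \ref{WTModsmc}) and the characteristic polynomial of $\varphi$ on an extension is the product of those on the sub and the quotient, $H^n_{et}(U)$ is pure of weight $w(n)$. Over $\F_\l$ the same indices show that both outer terms vanish unless $(2c-1)\mid n$, so $H^n_{et}(U,\F_\l)=0$ whenever $cn/(2c-1)\notin\mathbb{N}$, in agreement with the purity convention.

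The step requiring the most care is the construction of the Frobenius-equivariant Gysin sequence with the correct Tate twist: one must invoke cohomological purity for the smooth closed immersion $Z\hookrightarrow U'$ with torsion coefficients $\F_\l$ as well as $\Q_\l$, and verify that the weight-raising effect of the twist is exactly $2c$ (resp.\ $c$) in the sign convention used here. The remaining ingredients—the stability of good arrangements under deletion and restriction, and the additivity of weights in short exact sequences of Weil (resp.\ Tate) modules—are elementary.
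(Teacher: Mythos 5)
Your proof is correct and takes essentially the same route as the paper's: induction on $|I|$, the decomposition $U = U' - Z$ with $Z = W_{i_0}\cap U'$, the Frobenius-equivariant Gysin sequence with twist $(-c)$, and the weight count $w(n-2c+1)+2c=w(n)$. The only differences are cosmetic: you spell out details the paper leaves implicit (deletion-stability of good arrangements, equivariance of the localization sequence, purity of extensions via the abelian structure of $\WMod$ and $\TMod$), and your auxiliary induction on $d$ is harmless but unnecessary, since the arrangements for both $U'$ and $Z$ already have $|I|-1$ members.
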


\begin{proof}
We proceed by induction on the cardinality of $I$. When $I$ is empty there is nothing to prove. Assume that the lemma has been proven for $|I|<n$. Let $j$ be an element of $I$. Let $X=V-\bigcup_{i\neq j} W_i$, let $U=V-\bigcup_i W_i$, then $U$ is an open subscheme of $X$ whose complement is $Z=W_j-\bigcup_{i\neq j} W_i\cap W_j$. By remark \ref{rem : good arrangements}, the scheme $X$ is a complement of good arrangements of $|I|-1$ codimension $c$ subspaces and the scheme $Z$ is either empty or a complement of good arrangements of $|I|-1$ codimension $c$ subspaces. We have a Gysin long exact sequence
\[\ldots\to H^{n-2c}_{et}(Z,\F_\l)(-c)\to H^n_{et}(X,\F_\l)\to H^n_{et}(U,\F_\l)\to H^{n+1-2c}_{et}(Z,\F_\l)(-c)\to \ldots\]
By the induction hypothesis, both $H^n_{et}(X,\F_\l)$ and $H^{n+1-2c}_{et}(Z,\F_\l)(-c)$ are of weight $cn/(2c-1)$, thus $H^n_{et}(U,\F_\l)$ is also of weight $2cn/(2c-1)$ as desired.
\end{proof}

\begin{theo}\label{formal_subspaces} Let $X$ be a complement of a good arrangement of codimension $c$ subspaces defined over $K$. Then:
\begin{enumerate}
 \item [(i)]If $(2c-1)(k-2)/hc\notin\mathbb{Z}$ then all $k$-tuple Massey products are trivial in $H^*(X_{an},\F_\l)$.
 \item [(ii)]The space $H^n(X_{an},\F_\l)$ contains no non-trivial Massey products for 
 \[n\leq \left\ceil{\frac{h (2c-1)(2c-2)}{c}+4c-3\right}\]
  \item [(iii)] If it is simply connected, the dg-algebra $C^*_{sing}(X_{an},\F_\l)$ is $N$-formal, with \[N=\floor{(h-1)(2c-1)/c}.\]
\end{enumerate}
\end{theo}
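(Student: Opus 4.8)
The plan is to reduce all three statements to Theorem \ref{maindg-algebras} applied with the single parameter $\alpha = c/(2c-1)$. First I would invoke Lemma \ref{purity_codim}: since $X$ is the complement of a good arrangement of codimension $c$ subspaces defined over $K$, the module $H^n_{et}(X_{\overline{K}},\F_\l)$ is pure of weight $cn/(2c-1)$ for every $n$. Setting $\alpha = c/(2c-1)$, this says exactly that $H^n_{et}(X_{\overline{K}},\F_\l)$ is a pure Tate module of weight $\alpha n$, so the purity hypothesis of Theorem \ref{maindg-algebras} is satisfied with this value of $\alpha$ (which lies in the required range, since $\alpha = c/(2c-1)\leq 1$).

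The one step that is not purely mechanical is pinning down the connectivity needed for part (ii). Here I would use that $c$ and $2c-1$ are coprime: any common divisor $d$ satisfies $d\mid 2c$ and $d\mid 2c-1$, hence $d\mid 1$. Consequently the weight $\alpha n = cn/(2c-1)$ is an integer precisely when $(2c-1)\mid n$, so $\alpha$-purity forces $H^n_{et}(X_{\overline{K}},\F_\l)=0$ whenever $(2c-1)\nmid n$. Transporting this vanishing to singular cohomology through the Artin comparison of Section \ref{SecEtale}, I obtain $H^i(X_{an},\F_\l)=0$ for all $0<i\leq 2c-2$; that is, $X$ is $(2c-2)$-connected.

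It then remains to substitute into the three parts of Theorem \ref{maindg-algebras}. Part (i) is immediate from Theorem \ref{maindg-algebras}(i) with $\alpha = c/(2c-1)$. For part (ii) I would feed in the connectivity $r = 2c-2$ established above; Theorem \ref{maindg-algebras}(ii) then produces vanishing of Massey products up to degree
\[
\ceil{\frac{hr}{\alpha}+2r+1}=\ceil{\frac{h(2c-1)(2c-2)}{c}+4c-3},
\]
which is the claimed bound. For part (iii), under the standing hypothesis that $H^*_{et}(X,\F_\l)$ is simply connected, Theorem \ref{maindg-algebras}(iii) yields $N$-formality of $C^*_{sing}(X_{an},\F_\l)$ with
\[
N=\floor{\frac{h-1}{\alpha}}=\floor{\frac{(h-1)(2c-1)}{c}}.
\]

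I expect no serious obstacle here, since the substantive content has already been absorbed upstream: the Gysin-sequence induction computing the weights is Lemma \ref{purity_codim}, and the weight-decomposition formality machinery is Theorem \ref{maindg-algebras}. The only point that requires a moment's thought is the coprimality observation, which is what converts purity into the connectivity $r = 2c-2$ feeding part (ii); everything else is bookkeeping in the parameter $\alpha = c/(2c-1)$.
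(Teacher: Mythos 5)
Your proposal is correct and follows the same overall route as the paper's proof, which is exactly the two-line reduction you describe: Lemma \ref{purity_codim} gives $\alpha$-purity with $\alpha=c/(2c-1)$, and Theorem \ref{maindg-algebras} is applied with that $\alpha$. The one step where you genuinely diverge is the connectivity input to part (ii): the paper establishes $H^i(X_{an},\F_\l)=0$ for $0<i\leq 2c-2$ by a separate induction on the number of subspaces, whereas you extract it directly from purity via the coprimality $\gcd(c,2c-1)=1$, which forces $H^n_{et}(X_{\overline{K}},\F_\l)=0$ whenever $(2c-1)\nmid n$ under the paper's convention that a Tate module of non-integral weight is zero. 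Your shortcut is legitimate --- that vanishing clause really is part of what the Gysin induction in Lemma \ref{purity_codim} proves (its base case and inductive step both yield zero groups when $cn/(2c-1)\notin\Z$) --- and it is arguably cleaner, since it avoids running a second induction; the paper's route merely has the mild advantage of not leaning on the non-integral-weight convention. One point worth flagging: your substitution in part (i) produces the condition ``$\alpha(k-2)/h = c(k-2)/(h(2c-1))$ is not an integer,'' whereas the theorem as printed says ``$(2c-1)(k-2)/(hc)$ is not an integer,'' and these are genuinely inequivalent conditions (for $c=2$, $h=5$, $k=17$ the printed condition holds but yours fails, and for $c=2$, $h=5$, $k=12$ the reverse happens). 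Since parts (ii) and (iii) of the printed statement are consistent only with $\alpha=c/(2c-1)$, the fraction in (i) appears to be inverted in the paper --- a typo that your explicit computation exposes rather than creates; what you (and the paper's one-line proof) actually establish is vanishing under $\alpha(k-2)/h\notin\Z$, which agrees with the printed condition in the hyperplane case $c=1$ used in Remark \ref{rem: Matei}.
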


\begin{proof}By Lemma \ref{purity_codim} we know that codimension $c$ subspace arrangements satisfy the
conditions of Theorem \ref{maindg-algebras}. For part (ii), one shows by induction on the number of subspaces that the cohomology of $X$ vanishes in degree $\leq 2c-2$.
\end{proof}

\begin{rem} \label{rem: Matei}
By (i) of the above Theorem we have that for complements of hyperplane arrangements defined over $K$,
all triple Massey products over $\F_\l$ are trivial, as long as $h>1$.
In \cite{Matei}, Matei showed that for every odd prime $\l$, there is a
(non-simply connected)
complement of hyperplane arrangements $X$ in $\C^3$ with a non-trivial triple Massey product in $H^2(X,\F_\l)$.
These two facts do not contradict each other since Matei's hyperplane arrangement cannot be modeled over a $p$-adic field $K$ with residue field $\F_q$ unless $\l$ divides $q-1$ (indeed, it requires $K$ to have all $\l$-th root of unity) but in that case $h=1$ and Theorem \ref{formal_subspaces} is vacuous.
\end{rem}

The above result applies to configuration spaces of $m$ points in $\mathbb{C}^d$.  We get:
\begin{theo}\label{formal_config}
Let $d\geq 2$. For any finite field $\F_\l$, the dg-algebra 
$C_{sing}^*(F_m(\mathbb{C}^d),\F_\l)$ is $N$-formal, with
\[N=\left\floor{\frac{(\l-2)(2d-1)}{d}\right}.\]
\end{theo}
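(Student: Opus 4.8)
The plan is to realize $F_m(\mathbb{C}^d)$ as the complex analytic space of a complement of a good arrangement, so that Theorem \ref{formal_subspaces}(iii) applies directly, and then to choose the $p$-adic base field so that the parameter $h$ attains its maximal value $\l-1$. First I would invoke the example preceding Lemma \ref{purity_codim}, which exhibits $F_m(\mathbb{A}^d)$ as the complement $V-\bigcup_{(i,j)}W_{(i,j)}$ of the diagonal subspaces $W_{(i,j)}=\{(x_1,\ldots,x_m)\in(K^d)^m\,;\,x_i=x_j\}$ inside $V=(K^d)^m$, and observes that this is a good arrangement of codimension $c=d$ subspaces. By Lemma \ref{purity_codim}, the étale cohomology $H^n_{et}((F_m(\mathbb{A}^d))_{\overline{K}},\F_\l)$ is then a pure Tate module of weight $\alpha n$ with $\alpha=d/(2d-1)$, so the hypotheses of Theorem \ref{maindg-algebras} (hence of Theorem \ref{formal_subspaces}) are met with this value of $\alpha$ and $c=d$.

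Next I would arrange $h=\l-1$. Since $\F_\l^\times$ is cyclic of order $\l-1$ it admits a generator, and by Dirichlet's theorem on primes in arithmetic progressions there is a prime $p\neq\l$ lying in a primitive-root residue class modulo $\l$; as the order of $p$ in $\F_\l^\times$ depends only on $p\bmod\l$, such a $p$ is itself a primitive root, so $h=\l-1$. Taking $K=\Q_p$, whose residue field is $\F_p$, gives $q=p$ and hence $h=\l-1$. The scheme $F_m(\mathbb{A}^d)$ is defined over $\Z$, so its base change to $K$ and then to $\C$ recovers the fixed analytic space $F_m(\mathbb{C}^d)$ whose singular cochains we study; in particular the complex analytic object does not depend on this choice, which is precisely what gives us the freedom to optimize $h$.

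Finally I would check the simple-connectedness hypothesis of Theorem \ref{formal_subspaces}(iii). By $\alpha$-purity with $\alpha=d/(2d-1)$, the group $H^n_{et}$ vanishes unless $(2d-1)\mid dn$; since $\gcd(d,2d-1)=1$ this forces $(2d-1)\mid n$, so the cohomology is concentrated in degrees divisible by $2d-1$. As $d>1$ we have $2d-1\geq 3$, whence $H^1_{et}=0$ and $H^*_{et}(F_m(\mathbb{A}^d),\F_\l)$ is simply connected. Applying Theorem \ref{formal_subspaces}(iii) with $c=d$ and $h=\l-1$ then yields that $C^*_{sing}(F_m(\mathbb{C}^d),\F_\l)$ is $N$-formal with
\[
N=\floor{(h-1)(2c-1)/c}=\floor{(\l-2)(2d-1)/d},
\]
as asserted. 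The substance of the argument is entirely carried by the results already established for good arrangements; the only genuinely new input is the number-theoretic step producing a prime $p$ with $h=\l-1$, and this is exactly what makes the resulting bound match the stated value of $N$.
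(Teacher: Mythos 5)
Your proposal is correct and follows essentially the same route as the paper: realize $F_m(\mathbb{A}^d)$ as the complement of the good arrangement of diagonal subspaces of codimension $c=d$, choose a prime $p$ generating $\F_\l^\times$ so that $h=\l-1$, and apply Theorem \ref{formal_subspaces}(iii). You are in fact slightly more thorough than the paper's two-line proof, since you make explicit both the Dirichlet argument producing such a $p$ and the simple-connectedness check via concentration of the pure cohomology in degrees divisible by $2d-1\geq 3$, points the paper leaves implicit (the latter being exactly why the hypothesis $d>1$ is needed).
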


\begin{proof}
The space $F_m(\mathbb{C}^d)$ is the complement of a good codimension $d$ subspace arrangement defined over any of the fields $\Q_p$ (in fact it can be defined over $\Z$). We can thus pick a prime number $p$ such that $p$ generates $\F_\l^\times$ and we get the desired result from the previous theorem.
\end{proof}

\begin{coro}\label{coroconfig}
Let $d\geq 2$. For any finite field $\F_\l$, the dg-algebra $C_{sing}^*(F_m(\mathbb{C}^d),\F_\l)$ is formal when $\l\geq (m-1)d+2$.
\end{coro}

\begin{proof}
The cohomology of $C_{sing}^*(F_m(\mathbb{C}^d),\F_\l)$ is concentrated in degree $\leq (m-1)(2d-1)$. Therefore, by the previous theorem, this dg-algebra is formal when
\[(m-1)(2d-1)\leq \frac{(\l-2)(2d-1)}{d}.\qedhere\]
\end{proof}

\begin{example}
Consider the configuration space $F_m(\C)$ of $m$ points in $\C$. In \cite{Salvatore} it is shown that $C^*_{sing}(F_m(\C),\F_2)$ is not formal for any $m\geq 4$, and the question of whether 
$C^*_{sing}(F_m(\C),\F_\l)$ is formal for $\l>2$ is left open. 
Theorem \ref{formal_subspaces} ensures that if
$\l\geq k$ then all $k$-tuple Massey products are trivial in $H^*(F_m(\C),\F_\l)$.
This partially answers a question raised at the end of \cite{Salvatore},
asking how far one has to go on the filtered model to find obstructions to formality of the dg-algebra $C^*_{sing}(F_m(\C),\F_\l)$. 
\end{example}

\begin{rem}
In \cite{BH}, similar formality results are obtained for the spaces $F_m(\R^d)$ with $d$ not necessarily even. The strategy of proof is the same as the one used here, after building an automorphism on the dg-algebra of singular cochains that plays the role of the Frobenius automorphism. However, contrary to the situation here, that automorphism does not come directly from algebraic geometry.

In \cite{drummondhomotopy}, these formality results are proved over the ring $\Z_\l$ instead of $\F_\l$. This paper uses homotopy transfer techniques instead of the $\infty$-categorical methods used in the present paper.
\end{rem}

\appendix

\section{A counter-example}

In this appendix, we produce an example showing that Proposition \ref{Nformaldg-algebras} is incorrect if we remove the simple connectivity assumption. Consider the following commutative dg-algebra over any field $\kk$:
\[B=\Lambda(x,y,z_0,z_1,z_2), dx=dy=0, dz_0=xz_2, dz_1=xz_0, dz_2=xy\]
Let $I$ be the dg-ideal of $B$ generated by $yz_2,yz_0$ and $xz_0z_1$ and let $A=B/I$.

A straightforward computation shows that the non-zero cohomology groups of $A$ are as follows
\begin{align*}
H^0(A)&=\kk[1]\\
H^1(A)&=\kk[x]\oplus\kk[y]\\
H^2(A)&=\kk[xz_1]\oplus\kk[yz_1]\oplus\kk[z_0z_2]\\
H^3(A)&=\kk[xyz_1]\oplus\kk[z_2z_0z_1]
\end{align*}
(where we denote by $[a]$ the cohomology class of a cocycle $a$).

If we assign weight $0$ to $z_0$, weight 1 to $x,y,z_1$ and weight $2$ to $z_2$, the algebra $B$ becomes an object of $gr^{(3)}\cat{DGA}_{\kk}$. Since the ideal $I$ is generated by homogeneous elements, the algebra $A$ is also in $gr^{(3)}\cat{DGA}_{\kk}$, and by inspecting its cohomology we see that it is in $gr^{(3)}\cat{DGA}_{\kk}^{1\text{-}pure}$.

On the other hand, we claim that $A$ is not $2$-formal. This can be seen by observing that there is a non-trivial $5$-tuple Massey product of elements in $H^1(A)$, given by
\[\langle [x],[x],[x],[x],[y]\rangle=\{[xz_1]\}.\]

\bibliographystyle{alpha}

\bibliography{biblio}

\end{document}